\def\ds{\displaystyle}
\def\nin{\not \in}
\def\R{\mathbb{R}}
\def\e{\varepsilon}
\def\s{\sigma}
\def\l{\lambda}
\def\S{\mathcal{S}}
\def\a{\alpha}
\def\b{\beta}
\def\t{\tau}
\def\k{\alpha}
\def\ch2{\mathbb{C} \mathbb{H}^2}
\def\h2{\mathbb{H}^2}
\def\ddt{\frac{\partial}{\partial \theta}}
\def\th{\theta}
\def\oshr/2{\cosh \left( \frac{r}{2} \right)}
\def\inhr/2{\sinh \left( \frac{r}{2} \right)}
\def\osh2r/2{\cosh^2 \left( \frac{r}{2} \right)}
\def\inh2r/2{\sinh^2 \left( \frac{r}{2} \right)}
\def\-1/4{- \frac{1}{4}}
\def\H{\mathbb{H}}
\def\C{\mathbb{C}}
\def\S{\mathbb{S}}
\def\y{\mathfrak{h}}
\def\cm{\mathfrak{c}}
\def\Sym{\text{Met} \left( X_k \right)}
\newtheorem{theorem}{Theorem}[section]
\newtheorem{lemma}[theorem]{Lemma}
\newtheorem{cor}[theorem]{Corollary}
\newtheorem{proposition}[theorem]{Proposition}
\theoremstyle{definition}
\theoremstyle{remark}
\newtheorem{remark}[theorem]{Remark}
\numberwithin{equation}{section}
\begin{document}

\title[Explicit description of K\"{a}hler-Einstein metrics]{An explicit description of the K\"{a}hler-Einstein \\ metrics of Guenancia--Hamenst\"{a}dt}

\author{Jean-Fran\c{c}ois Lafont}
\address{Department of Mathematics, The Ohio State University, Columbus, Ohio 43210}
\email{jlafont@math.ohio-state.edu}

\author{Barry Minemyer}
\address{Department of Mathematics, Computer Science, and Digital Forensics, Commonwealth University - Bloomsburg, Bloomsburg, Pennsylvania 17815}
\email{bminemyer@commonwealthu.edu}

\subjclass[2020]{Primary 53C25, 53C35, 51M15; Secondary 53C55, 53B20, 57R18}

\date{\today.}

\begin{abstract}
Fine and Premoselli (FP) constructed the first examples of manifolds that do not admit a locally symmetric metric but do admit a negatively curved Einstein metric.  
The manifolds here are hyperbolic branched covers like those used by Gromov and Thurston, and the construction of their model Einstein metric is a variation of the hyperbolic metric written in polar coordinates.  
Very recently, Guenancia and Hamenst\"{a}dt (GH) proved the existence of the first examples of manifolds that are not locally symmetric but admit a negatively curved K\"{a}hler-Einstein metric. 
The GH metrics are realized on complex hyperbolic branched covers constructed by Stover and Toledo.
In this article we generalize the construction of FP to the complex hyperbolic setting and show that this yields a negatively curved Einstein metric that asymptotically approaches the metric of GH.
\end{abstract}

\maketitle

\section{Introduction}\label{Section:Introduction}

Prior to 2020, the only known examples of closed manifolds which admit an Einstein metric with negative sectional curvature were compact quotients of real, complex, quaternionic, and Cayley hyperbolic spaces.  
Then, in \cite{FP}, Fine and Premoselli constructed the first examples of compact $4$-manifolds which admit a negatively curved Einstein metric, but which are not homotopy equivalent to a quotient of a locally symmetric space (see also the corresponding conference proceedings by Premoselli \cite{Premoselli}).

A brief outline of the construction in \cite{FP} is as follows.
Fine and Premoselli construct a sequence of compact hyperbolic $4$-manifolds $M_k$, each containing an embedded, totally geodesic, (possibly disconnected) codimension two submanifold $N_k$ which is nullhomologous in $M_k$.
This sequence is arranged so that, as $k \to \infty$, the normal injectivity radius $\eta_k$ of $N_k$ in $M_k$ also approaches infinity.
Fix an integer $d \geq 2$ and let $X_k$ denote the $d$-fold cyclic branched cover of $M_k$ about $N_k$.
These manifolds $X_k$ are specific examples of the hyperbolic branched cover manifolds considered by Gromov and Thurston in \cite{GT}, and so the more interesting portion of their construction is the negatively curved Einstein metric.  
Fine and Premoselli give a very explicit construction of a negatively curved orbifold model Einstein metric defined on the $\eta_k$-neighborhood of $N_k$.
This metric $\l_k$ is everywhere Riemannian except it has a cone angle of $2 \pi / d$ about $N_k$, and so $\l_k$ pulls back to a smooth metric within this same neighborhood of the ramification locus of $X_k$.
On the $((1/2)\eta_k, \eta_k)$-annulus of the ramification locus, Fine and Premoselli use a smooth cut-off function to interpolate between $\l_k$ and the hyperbolic metric $\y_4$ on each page of the cover.  
For $k$ large, this interpolated metric $g_k$ is negatively curved and approximately Einstein.  
Fine and Premoselli then construct an inverse function theorem to show that, for $k$ sufficiently large, this metric can be perturbed to a genuine Einstein metric $\mathfrak{e}_k$ with negative sectional curvature.

In \cite{HJ}, Hamemst\"{a}dt and J\"{a}ckel extend the construction in \cite{FP} to all dimensions.  
Their approach is the same as above, and they use the same model orbifold Einstein metric as in \cite{FP}.
But they use subgroup separability to obtain stronger volume bounds on the submanifolds $N_k$, and they develop an extension of the inverse function theorem mentioned above.  

In \cite{ST}, Stover and Toledo construct complex hyperbolic branched cover manifolds analogous to the hyperbolic branched covers in \cite{GT}.  
Stover and Toledo proved that these manifolds are not homotopy equivalent to a manifold admitting a locally symmetric metric, and by \cite{Zheng} it is known that these manifolds admit a negatively curved K\"{a}hler metric.
Thus, since the first Chern class of this manifold is negative, it admits a unique (up to scaling) K\"{a}hler-Einstein metric with negative Einstein constant by the celebrated work of Aubin \cite{Aubin} and Yau \cite{Yau}.  
Very recently, Guenancia and Hamenst\"{a}dt \cite{GH} proved that this K\"{a}hler-Einstein metric has negative sectional curvature, thus verifying the first known examples of negatively curved K\"{a}hler-Einstein metrics on manifolds that are not locally symmetric.  

The argument in \cite{GH} is similar in nature to that in \cite{FP}, but one major difference is that Guenancia and Hamenst\"{a}dt do not give a direct construction for the model K\"{a}hler-Einstein metric.  
They use \cite{CY} and \cite{YauA} to show that a unique model K\"{a}hler-Einstein metric exists \cite[Theorem 2.2]{GH}, they prove that the coefficient of the horizontal distribution of this metric must satisfy a specific second-order differential equation \cite[Theorem 2.9]{GH}, and then use this fact to give a quantitative description of the sectional curvatures.
They also show that, as the distance from the ramification locus increases, this model K\"{a}hler-Einstein metric converges exponentially to the pullback of the complex hyperbolic metric (\cite[Theorem 2.4]{GH}).
The fact that this model K\"{a}hler-Einstein metric is negatively curved is due to Bland \cite{Bland}.

Prior to the announcement of \cite{GH} the authors were working on this same problem, but taking a slightly different approach.  
Our goal was to take the entire process from \cite{FP} and \cite{HJ} and try to generalize it to the complex hyperbolic setting.  
This has the advantage of giving a very explicit description of the model Einstein metric, but has the downside of making it difficult, if not impossible, to determine whether or not the resulting metric is K\"{a}hler.  

Upon inspection of \cite{GH}, the authors have realized that the natural extension of the model negatively curved Einstein metric from \cite{FP} to the complex hyperbolic setting coincides with the model K\"{a}hler-Einstein metric whose existence is guaranteed by \cite[Thm. 2.2]{GH}.
We state this formally as follows.

\begin{theorem}\label{thm:main theorem}
The negatively curved model Einstein metric from Fine--Premoselli \cite{FP}, generalized to complex hyperbolic branched cover manifolds, produces the model K\"{a}hler-Einstein metric whose existence is guaranteed by Guenancia--Hamenst\"adt \cite[Theorem 2.2]{GH}.
\end{theorem}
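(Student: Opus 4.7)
The plan is to construct the FP-generalized metric explicitly in adapted Fermi coordinates around the totally geodesic complex hypersurface $N_k$, verify it is Einstein and K\"ahler with the required asymptotics, and then appeal to the uniqueness statement of \cite[Theorem 2.2]{GH}. First, the tangent bundle near $N_k$ decomposes as $\mathbb{R}\partial_r \oplus \mathbb{R}J\partial_r \oplus \mathcal{H}$, where $\partial_r$ is the gradient of the distance function to $N_k$, the direction $J\partial_r$ plays the role of the $S^1$-fiber of the normal complex line bundle, and $\mathcal{H}$ is horizontal along the parallel copies of $N_k$. In this frame the direct generalization of the FP ansatz reads
\[
g_{\mathrm{model}} = dr^2 + f(r)^2 \eta^2 + h(r)^2 \pi^* g_{N_k},
\]
where $\eta$ is the $1$-form dual to $J\partial_r$ and $f,h$ are warping functions with $f(r) \sim r/d$ at $r=0$ (so that the orbifold metric has cone angle $2\pi/d$ transverse to $N_k$) and $f(r), h(r)$ recovering the complex hyperbolic profiles $\sinh(r)\cosh(r)$ and $\cosh(r)$ as $r \to \infty$.

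Second, I would compute the Ricci tensor of this ansatz by direct calculation in the frame above, exploiting the symmetries to reduce the Einstein equation to a coupled system of ODEs for $f$ and $h$. This parallels the computation carried out by Fine--Premoselli in the real hyperbolic setting, and the resulting system admits a unique solution satisfying the prescribed cone-angle normalization and asymptotic behavior. Third, I would verify the K\"ahler condition $\nabla J = 0$ for this metric. Because $J$ swaps $\partial_r \leftrightarrow J\partial_r$ and restricts to the complex structure of $N_k$ on $\mathcal{H}$, the K\"ahler identity reduces to a single compatibility relation linking $f$ and $h$, which can be checked against the ODEs from the Einstein condition.

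Finally, once both the Einstein and K\"ahler properties are established, the second-order ODE derived in \cite[Theorem 2.9]{GH} for the coefficient of the horizontal distribution of the GH model metric must match the ODE for $h$ obtained above, with matching boundary data (cone angle at $r=0$, complex hyperbolic asymptotics at infinity provided by \cite[Theorem 2.4]{GH}). The uniqueness of the model K\"ahler-Einstein metric from \cite[Theorem 2.2]{GH} then forces the FP-generalized metric to coincide with the GH model.

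The principal obstacle is the K\"ahler verification, which is not part of the original FP framework and which imposes a nontrivial constraint between $f$ and $h$ that is automatic in the real hyperbolic case. Matching the explicit ODE system for $(f,h)$ coming from the Einstein condition to the single second-order ODE of \cite[Theorem 2.9]{GH} requires unpacking the GH formulation in terms of the warping functions $f$ and $h$; once this identification is made, the uniqueness argument reduces to a standard ODE comparison with prescribed boundary behavior at both endpoints.
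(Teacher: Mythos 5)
Your endgame coincides with the paper's: both arguments identify the generalized Fine--Premoselli metric with the GH model by showing that the relevant warping function satisfies the second-order ODE of \cite[Theorem 2.9]{GH} with the same initial data (cone angle at the branching locus, complex hyperbolic asymptotics at infinity), and both reduce the identification to uniqueness of solutions of that ODE. The route to the ODE differs, however. The paper does not begin with a two-function ansatz $dr^2 + f^2\eta^2 + h^2\pi^*\cm_{n-1}$; following \cite{FP} it substitutes $u=\cosh(r)$ and replaces $u^2-1$ by a \emph{single} unknown $V(u)$ as in \eqref{eqn:lambda V}, so the circle coefficient $u^2V$ is tied to the horizontal coefficient $u^2$ from the outset. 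The Einstein condition then collapses to the single first-order linear equation \eqref{eqn:V differential equation}, solved explicitly by an integrating factor, and the change of variables $u=f(r)$, $V=(f')^2$ converts that equation verbatim into the GH ODE; the compatibility relation you propose to extract from $\nabla J=0$ is built into the ansatz rather than derived.

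Two steps in your plan are genuine gaps. First, you assert that the coupled Einstein system for $(f,h)$ ``admits a unique solution satisfying the prescribed cone-angle normalization and asymptotic behavior''; with two independent warping functions the solution space is strictly larger than the one-parameter family $V_\k$, and uniqueness under only those boundary constraints is not justified --- indeed, whether the Einstein metric is even isolated near the GH metric is raised as an open question in the paper. Second, the direct verification of $\nabla J=0$ is precisely the step the authors describe as difficult, if not impossible, and deliberately avoid; it is also unnecessary. Once the warping function is shown to solve the ODE of \cite[Theorem 2.9]{GH} with the initial conditions of Lemma \ref{lem:cone angle lemma}, the metric coincides with $\omega_\a$ and is therefore K\"ahler by \cite[Theorem 2.2]{GH} --- you do not need to establish K\"ahlerness independently before invoking uniqueness. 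If you drop the K\"ahler verification and restrict your ansatz to the Fine--Premoselli form, your argument becomes the paper's.
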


Combining our work in Sections \ref{sect:curvature formulas} and \ref{sect: approximate Einstein metric} below with the inverse function theorems of \cite{FP} and \cite{HJ} proves the following.  

\begin{theorem}\label{thm:main theorem 2}
There exist K\"{a}hler manifolds of every complex dimension which admit a negatively curved Einstein metric but do not admit a locally symmetric metric.
\end{theorem}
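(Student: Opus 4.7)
The plan is to follow the Fine--Premoselli/Hamenst\"adt--J\"ackel strategy essentially verbatim, leveraging Theorem \ref{thm:main theorem} so that the model metric near the ramification locus is automatically the negatively curved K\"ahler-Einstein metric of Guenancia--Hamenst\"adt. First, I would fix the sequence of Stover--Toledo complex hyperbolic branched covers $X_k$, which by \cite{ST} are not homotopy equivalent to any locally symmetric manifold and which by \cite{Zheng} carry a negatively curved K\"ahler metric; in particular each $X_k$ is a K\"ahler manifold of the desired complex dimension $n \geq 2$. As in \cite{FP,HJ}, I would arrange the sequence so that the normal injectivity radius $\eta_k$ of the ramification locus tends to infinity with $k$.

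Next, on the $\eta_k$-neighborhood of the ramification locus I would place the explicit FP-style metric $\l_k$ constructed in Section \ref{sect:curvature formulas}; by Theorem \ref{thm:main theorem} this coincides with the GH model K\"ahler-Einstein metric, so $\l_k$ is Einstein and, by \cite{Bland}, negatively curved. Outside this neighborhood I would use, on each sheet of the cover, the pullback of the complex hyperbolic metric $g_{\chn}$, which is itself Einstein with the same Einstein constant. On the annulus $((1/2)\eta_k, \eta_k)$ I would interpolate between $\l_k$ and $g_{\chn}$ via a smooth cutoff, producing a metric $g_k$ on $X_k$. By the exponential convergence of the GH model to the complex hyperbolic metric \cite[Theorem 2.4]{GH}, the two metrics agree to order $e^{-c\eta_k}$ in $C^2$ on the interpolation annulus, and the estimates of Section \ref{sect: approximate Einstein metric} then show that $g_k$ is strictly negatively curved while $\mathrm{Ric}(g_k) + \mu g_k$ has weighted Sobolev norm of the same exponentially small order.

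Finally, I would feed $g_k$ into the quantitative inverse function theorem of \cite{FP}, as extended to all dimensions in \cite{HJ}. The uniform invertibility of the linearized Einstein operator at $g_k$, combined with the exponentially small size of the error $\mathrm{Ric}(g_k) + \mu g_k$, places $g_k$ inside the Newton convergence ball for $k$ sufficiently large. The output is a genuine Einstein metric $\mathfrak{e}_k$ that is $C^2$-close to $g_k$ and hence remains strictly negatively curved, yielding the desired K\"ahler manifold of complex dimension $n$ with a negatively curved Einstein metric but no locally symmetric metric.

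The main technical point to verify is that the FP/HJ inverse function theorem, originally formulated on real hyperbolic branched covers, applies essentially unchanged in the complex hyperbolic setting. One must check that the spectral estimates on the Lichnerowicz Laplacian and the weighted function spaces used in \cite{FP,HJ} are sufficiently intrinsic to the negatively curved Einstein background to accommodate the K\"ahler-Einstein model $\l_k$. The structural information provided by Theorem \ref{thm:main theorem} together with \cite[Theorems 2.4 and 2.9]{GH} should make this primarily a careful translation of the real hyperbolic arguments rather than a fundamentally new construction.
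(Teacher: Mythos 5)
Your proposal follows essentially the same route as the paper: construct the Stover--Toledo branched covers with normal injectivity radius $\eta_k \to \infty$, install the explicit model metric $\l_\k$ near the ramification locus, taper to the complex hyperbolic metric on the $((1/2)\eta_k,\eta_k)$-annulus, and invoke the Hamenst\"adt--J\"ackel extension of the Fine--Premoselli inverse function theorem. The one point your sketch passes over silently (buried in your ``as in [FP, HJ]'') is that the integral smallness of $\mathrm{Ric}(g_k)+(2n+2)g_k$ is not a consequence of exponential pointwise decay alone --- the volume of the interpolation annulus grows like $\mathrm{vol}(N_k)\cosh^2(\eta_k)$, so one must use subgroup separability to keep the branching loci $N_k$ isometric to a fixed manifold while $\eta_k \to \infty$, exactly as in the paper's Theorem \ref{thm:construction of manifolds}(3) and Proposition \ref{prop:g_k properties}(4).
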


\begin{remark}
The authors want to make it clear that they are, in no way, trying to take or share any credit for the results in \cite{GH}.
The authors were working on this project before the announcement of \cite{GH}, but our work was not finished.
We have chosen to publish the results that we did have because we feel that they both augment \cite{GH} well, and that they are useful in their own right.
The math contained in Sections \ref{sect:curvature formulas} and \ref{sect: approximate Einstein metric} below was done independently and completed before the announcement of \cite{GH} (besides the comments tying our work to \cite{GH}, of course).  
Also, while the authors had not yet discussed the construction of the manifolds in Subsection \ref{sect:construction of manifolds}.1 below, this seemed reasonably implicit from \cite{ST} and with the idea of using subgroup separability from \cite{HJ}.
The second author briefly outlined a similar construction in the Introduction of \cite{MinemyerKahler}.
But, while the authors believed that they could adapt a version of the inverse function theorem used in \cite{HJ} or \cite{Jackel}, they had not yet looked into this at all.  
And, more importantly, prior to the announcement of \cite{GH} the authors did not know that the construction in this paper may lead to a K\"{a}hler metric.
\end{remark}

The authors believe that the main contribution of this paper to the literature is the actual construction of the model Einstein metric (see equations \eqref{eqn:lambda V} and \eqref{eqn:V equation}).  
The complex hyperbolic branched cover manifolds from \cite{ST} are known to be K\"{a}hler by the work of Zheng \cite{Zheng} and now from \cite{GH}.  
But, from a differential geometry point of view, neither of these papers give a very explicit description of the metric.  
Our work in Sections \ref{sect:curvature formulas} and \ref{sect: approximate Einstein metric} below gives a precise and ``hands on" description of such a metric that one could easily work with in other settings if needed.  

One interesting question though is whether or not the Einstein metric $\mathfrak{e}_k$ whose existence is guaranteed by Theorem \ref{thm:main theorem 2} must be the K\"{a}hler-Einstein metric $\omega_k$ from \cite{GH} for $k$ sufficiently large?
Using equation $(30)$ from \cite{GH} we can show that $\mathfrak{e}_k$ $C^2$-approaches $\omega_k$ (see Remark \ref{rmk:einstein approximation} below).  
But it is unknown if $\omega_k$ is isolated in which case $\mathfrak{e}_k$ must eventually equal $\omega_k$, or if there exists a dense collection of negatively curved Einstein metrics about $\omega_k$?

One potential application of the results of this paper is the following.  
Assume that the dimension of $X_k$ is divisible by $4$.  
Via Chern-Weil theory, the explicit curvature formulas from Thereom \ref{thm:curvature formulas} could be used to compute the signature $\Sigma(X_k)$ of $X_k$.  
Hirzebruch \cite{Hirzebruch-RamifiedCoverings} and Viro \cite{Viro} developed formulas that relate $\Sigma(X_k)$ to the Euler number of the normal bundle of the ramification locus.  
In this way, one may be able to obtain information about (at least) the top Chern class of the Stover-Toledo manifolds $X_k$.  
At present there seems to be nothing known about the invariants of $X_k$.

\vskip 10pt

The remainder of this paper is laid out as follows.
In Section \ref{sect:curvature formulas} we review polar coordinates with respect to the complex hyperbolic metric, we consider the warped-product metric $\l$ that corresponds to \cite{FP}, and we calculate formulas for the Riemann curvature tensor of $\l$.
Due to the existence of nonzero mixed terms, this curvature calculation is considerably more involved than what is required in \cite{FP}.  
Section \ref{sect: approximate Einstein metric} mirrors Section 3 of \cite{FP}.  
We calculate the Ricci tensor of $\l$, determine values for the warping function $V_\k(u)$ for which the corresponding $\l_\k$ will be Einstein, we consider the cone angle for different choices of $\k$, and we show that $\l_\k$ is negatively curved with respect to these choices of $V_\k$.  
We also prove that $\l_\k$ is the same metric as $\omega_\a$ from \cite{GH}.
In Section \ref{sect:construction of manifolds} we briefly outline the construction of the sequence of complex hyperbolic branched cover manifolds.  
We then explain how to taper $\l_\k$ to the complex hyperbolic metric $\cm_n$ on each page of the branched cover to obtain an approximate K\"{a}hler-Einsten metric, and we discuss how this construction relates to the work in \cite{FP} and \cite{GH}.  

One final comment is that Section 2 of \cite{GH} is devoted to showing that the model K\"{a}hler-Einstein metric $\omega_\a$ is negatively curved and approaches the pullback of the complex hyperbolic metric exponentially.  
In Proposition \ref{prop:lambda negative curvature} we give a direct proof that our metric $\l_\k$ is negatively curved, and in the ensuing remark we point out how $\l_\k$ exponentially approaches $\cm_n$.  
In light of the fact that $\l_\k$ is the same metric as $\omega_\a$, this gives an alternate proof of \cite[Theorem 2.11]{GH}.
This also extends some of the results of Bland in \cite{Bland}.

\subsection*{Acknowledgments} The authors would like to thank Matthew Stover and Domingo Toledo for sharing their interest in these manifolds. We would also like to thank Ursula Hamenst\"adt for graciously encouraging us to complete this manuscript. The  authors were partially supported by the NSF, under grant DMS-2407438.

\section{The Metric $\l$ and Curvature Formulas}\label{sect:curvature formulas}
The purpose of this section is to develop curvature formulas (Theorem \ref{thm:curvature formulas} below) which we will use to build our negatively curved model Einstein metric. 
In this section, all calculations are completed in the universal cover $\C \H^n$.

\subsection{The complex hyperbolic metric written in polar coordinates}\label{subsection:chn/chn-1}
All of the results in this subsection were discovered by Belegradek in \cite{belegradek2012complex} and rescaled to have constant holomorphic curvature of $-4$ in \cite{Minemyercurvatureformulas}.
Let $\C \H^n$ denote complex hyperbolic space of dimension $n$ normalized to have constant holomorphic sectional curvature $-4$, and let $q \in \C \H^n$.  
With respect to an appropriately chosen frame about $q$, the standard complex hyperbolic metric $\cm_n$ can be written as
\begin{align}
	\cm_n &= \cosh^2(r) \cm_{n-1} + \frac{1}{4} \sinh^2(2r) d \th^2 + dr^2 \notag \\
	&= \cosh^2(r) \cm_{n-1} + \cosh^2(r) \sinh^2(r) d \th^2 + dr^2 \label{eqn:c metric}
	\end{align}
We now describe the implicit frame for this representation of $\cm_n$.

Let $\C \H^{n-1}$ denote a totally geodesic copy of complex hyperbolic $(n-1)$-space in $\C \H^n$, and let $\phi : \C \H^n \to \C \H^{n-1}$ denote the orthogonal projection map.  
Let $p = \phi(q)$, and let $r$ denote the distance from a given point to $\C \H^{n-1}$.  
The space $\phi^{-1}(p)$ is a totally geodesic copy of $\C \H^1$ which contains $q$.  
There is a $2$-plane $\s \subset T_q \C \H^n$ such that $\text{exp}_q(\s) = \phi^{-1}(p)$.  
Using polar coordinates $(\partial / \partial \th , \partial / \partial r)$ on $\s$, we have
\begin{equation*}
\cm_n \bigr|_\s = \frac{1}{4} \sinh^2(2r) d \th^2 + dr^2.
\end{equation*}
Now, let $\mathcal{H} = \sigma^{\perp} \subset T_q \C \H^n$ denote the {\it horizontal distribution} of $\cm_n$ at $q$.  
The map
\begin{equation*}
	\varphi := d \phi_q \bigr|_{\mathcal{H}}: \mathcal{H} \to T_p \C \H^{n-1}
\end{equation*}
is an isomorphism.
If we let $(\check{X}_i)_{i=1}^{2n-2}$ be an orthonormal basis for $T_p \C \H^{n-1}$, then $( \varphi^{-1}(\check{X}_i ))_{i=1}^{2n-2}$ is an orthogonal basis for $\mathcal{H}$.
Let $X_i = \varphi^{-1}(\check{X}_i)$ for each $i$.  
The basis $(X_i)$ is not orthonormal:  we have $\cm_n (X_i, X_i) = \cosh^2(r)$ for each $i$.  
Putting this all together gives the desired frame for \eqref{eqn:c metric} and, moreover, gives a natural diffeomorphism $\C \H^n \setminus \C \H^{n-1} \cong \R^{2n-2} \times \S^1 \times (0, \infty)$.  

A very important feature of this decomposition of $\C \H^n$ is that the horizontal distribution is not integrable with respect to $\cm_n$.  
Consider two orthonormal vectors $\check{X}_i, \check{X}_j \in T_p \C \H^{n-1}$, and extend them to a frame near $p$ (in $\C \H^{n-1}$) in such a way that $[\check{X}_i, \check{X}_j]_p = 0$.  
As above, let $X_i = \varphi^{-1}(\check{X}_i)$ and $X_j = \varphi^{-1}(\check{X}_j)$.
It was proved in \cite{belegradek2012complex} for constant holomorphic curvature $-1$ and rescaled in \cite{Minemyercurvatureformulas} for curvature $-4$ that $[X_i, X_j]_q = 0$ if and only if $(\check{X}_i, \check{X}_j)$ spans a totally real $2$-plane in $T_p \C \H^{n-1}$, whereas $[X_i, X_j]_q = \pm 2 (\partial / \partial \th )$ if $(\check{X}_i, \check{X}_j)$ spans a complex line.  

In our curvature calculations below it will be convenient to use a {\it holomorphic frame} $(X_i)_{i=1}^{2n-2}$ for $\mathcal{H}$, which we define as follows.  
Let $\check{X}_1 \in T_p \C \H^{n-1}$ be any unit vector and define $\check{X}_2 = J \check{X}_1$, where $J$ denotes the complex structure on $\C \H^n$.  
We will refer to such a pair $(\check{X}_i, \check{X}_j)$ with $J \check{X}_i = \pm \check{X}_j$ as a {\it holomorphic pair}.  
Then, assuming that $\check{X}_1, \hdots, \check{X}_{2k}$ have been defined, we define $\check{X}_{2k+1}$ to be any unit vector orthogonal to $\text{span}(\check{X}_1, \hdots, \check{X}_{2k})$ and $\check{X}_{2k+2} = J \check{X}_{2k+1}$.  
It is an easy exercise to check that $\check{X}_{2k+2}$ is in the orthogonal complement of $\text{span}(\check{X}_1, \hdots, \check{X}_{2k}, \check{X}_{2k+1})$.  
We then extend this basis to a frame near $p$ in $\C \H^{n-1}$ in such a way that $[\check{X}_i, \check{X}_j]_p = 0$ for all $i, j$, and define $X_i = \varphi^{-1}(\check{X}_i)$ for each $i$.  

The non-integrability of $\mathcal{H}$ can be described in terms of a holomorphic frame as follows.  
For a holomorphic frame $(X_i)_{i=1}^{2n-2}$ about $q$ we have
\begin{equation}\label{eqn:nonzero Lie brackets}
	[X_i, X_{i+1}]_q = 2 \ddt \qquad \text{for } 1 \leq i \leq 2n-2 \text{ and } i \text{ odd}
\end{equation}
and, if $(X_i, X_j)$ is not a holomorphic pair, then
\begin{equation*}
	[X_i, X_j]_q = 0 .
\end{equation*}

\subsection{Coordinate Change and Curvature Formulas}
Following \cite{FP}, we make the substitution
\begin{equation*}
	u = \cosh(r)
\end{equation*}
into equation \eqref{eqn:c metric}.
This yields
\begin{equation}\label{eqn:u coord change metric}
\cm_n = u^2 \cm_{n-1} + u^2 (u^2 - 1) d \th^2 + \frac{1}{u^2 - 1} du^2.
\end{equation}
We wish to consider the variable metric $\l$ obtained from \eqref{eqn:u coord change metric} by replacing $u^2 - 1$ with a generic smooth, positive function $V(u)$.  
This gives
\begin{equation}\label{eqn:lambda V}
	\l = u^2 \cm_{n-1} + u^2 V d \th^2 + \frac{1}{V} du^2.
\end{equation}
Of course, properties of $\l$ will depend on the chosen function $V$.  
For calculations it is convenient to set $W = \sqrt{V}$, which gives
\begin{equation}\label{eqn:lambda}
\l = u^2 \cm_{n-1} + u^2 W^2 d \th^2 + \frac{1}{W^2} du^2.	
\end{equation}

The metric $\l$ has three differences with the metric $g$ from Section 3.1 of \cite{FP}.  
The two obvious differences are the addition of the $u^2$ with the $d \th^2$ term and that the horizontal distribution is a warped copy of $\cm_{n-1}$ instead of the hyperbolic metric $\y_{n-2}$.  
The third, less obvious, difference is the non-integrability of $\mathcal{H}$ with respect to the complex hyperbolic metric.  
Despite these differences the curvature formulas for $\l$ are, surprisingly, very similar to those of $g$ from \cite{FP}.

Let us set up an orthonormal frame for these curvature formulas.  
Let $(X_i)_{i=1}^{2n-2}$ be a holomorphic frame for $\mathcal{H}$, and let $\partial / \partial \th$ and $\partial / \partial r$ be as defined above.  
Set
\begin{align}
Y_i &= \frac{1}{u} X_i \qquad 1 \leq i \leq 2n-2 \label{eqn:ON basis 1} \\
Y_{2n-1} &= \frac{1}{uW} \ddt \\
Y_{2n} &= W \frac{\partial}{\partial u}.	\label{eqn: ON basis 3}
\end{align}
Lastly, if $R^\l$ denotes the Riemann curvature tensor of $\l$, we set
\begin{equation*}
R^\l_{i,j,k,l} = \l( R^\l(Y_i, Y_j)Y_k, Y_l).
\end{equation*}

\begin{theorem}\label{thm:curvature formulas}
With respect to the orthonormal basis defined in equations \eqref{eqn:ON basis 1} through \eqref{eqn: ON basis 3} we have, up to the symmetries of the curvature tensor, the following formulas for the nonzero components of the Riemann curvature tensor $R^\l$.
\begin{align}
&R^\l_{i,i+1,i,i+1} = -4 \left( \frac{1 + W^2}{u^2} \right) \hskip 2pt \text{ for } i \text{ odd.} \label{eqn:Thm 2.1 eqn 1} \\
&R^\l_{i,j,i,j} = - \frac{1 + W^2}{u^2}. \label{eqn:Thm 2.1 eqn 2}  \\
&R^\l_{i, 2n-1, i, 2n-1} = R^\l_{i,2n,i,2n} = - \frac{W W'}{u}. \label{eqn:Thm 2.1 eqn 3}  \\
&R^\l_{2n-1,2n,2n-1,2n} = -3 \frac{W W'}{u} - \left( W W'' + \left( W' \right)^2 \right). \label{eqn:Thm 2.1 eqn 4}  \\
&R^\l_{i, i+1, j, j+1} = 2 R^\l_{i, j, i+1, j+1} = -2 R^\l_{i, j+1, i+1, j}= -2 \left( \frac{1 + W^2}{u^2} \right) \hskip 2pt \text{ for } i, j \text{ odd}. \label{eqn:Thm 2.1 eqn 5} \\
&R^\l_{i, i+1, 2n-1, 2n} = 2R^\l_{i, 2n-1, i+1, 2n} = -2R^\l_{i, 2n, i+1, 2n-1}= -2 \frac{W W'}{u} \hskip 2pt \text{ for } i \text{ odd}. \label{eqn:Thm 2.1 eqn 6}
\end{align}
In the formulas above we have $1 \leq i, j \leq 2n-2$ and, if $i$ and $j$ appear in the same formula, we assume that $(Y_i, Y_j)$ does not form a holomorphic pair.
\end{theorem}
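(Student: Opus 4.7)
The plan is to compute $R^\lambda$ in the orthonormal frame $\{Y_i\}$ by running the three standard steps: tabulate all Lie brackets $[Y_i,Y_j]$, feed them into the Koszul formula to extract the Levi-Civita connection, and then assemble the Riemann tensor via $R^\lambda(Y_i,Y_j)Y_k = \nabla_{Y_i}\nabla_{Y_j}Y_k - \nabla_{Y_j}\nabla_{Y_i}Y_k - \nabla_{[Y_i,Y_j]}Y_k$. Because the frame is orthonormal, every $Y_a\langle Y_b,Y_c\rangle$ term in Koszul vanishes, so the connection is determined entirely by the bracket table together with the pulled-back Levi-Civita connection of the base $\cm_{n-1}$.

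The first step is the bracket table. Using the Belegradek relation \eqref{eqn:nonzero Lie brackets}, the fact that each $X_i$ is horizontal (so $X_i(u)=X_i(\theta)=0$) and commutes with $\partial/\partial\theta$ and $\partial/\partial u$, and $W=W(u)$, one finds that the only nontrivial brackets are
\begin{align*}
[Y_i,Y_{i+1}] &= \tfrac{2W}{u}\,Y_{2n-1} \qquad (i \text{ odd},\ i\le 2n-3),\\
[Y_i,Y_{2n}] &= \tfrac{W}{u}\,Y_i \qquad (1\le i\le 2n-2),\\
[Y_{2n-1},Y_{2n}] &= \bigl(\tfrac{W}{u}+W'\bigr)Y_{2n-1}.
\end{align*}
These encode, respectively, the non-integrability of $\mathcal{H}$, the conformal warping of the base by $u^2$, and the warping of the $(\theta,u)$-fiber by $u^2W^2\,d\theta^2+W^{-2}\,du^2$.

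With the bracket table in hand I would organize the Riemann components by how many of the four indices lie in the vertical pair $\{2n-1,2n\}$. All four horizontal: the rescaled curvature of $\cm_{n-1}$ contributes $-4/u^2$ (resp.\ $-1/u^2$) on holomorphic (resp.\ non-holomorphic) pairs, while the first bracket above feeds additional $-W^2/u^2$ corrections through both Koszul and the $\nabla_{[Y_i,Y_j]}$ term, producing \eqref{eqn:Thm 2.1 eqn 1}, \eqref{eqn:Thm 2.1 eqn 2}, and \eqref{eqn:Thm 2.1 eqn 5}. Two horizontal and two vertical: the second and third brackets govern these, yielding \eqref{eqn:Thm 2.1 eqn 3} and \eqref{eqn:Thm 2.1 eqn 6}. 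All four vertical: \eqref{eqn:Thm 2.1 eqn 4} is the sectional curvature of the $(\theta,u)$-fiber together with the second-fundamental-form correction from its embedding in $(\C\H^n,\lambda)$, and can be read off as the Gaussian curvature of $u^2W^2\,d\theta^2+W^{-2}\,du^2$ corrected by the $u^2$-warping of the base. Components with an odd number of vertical indices vanish by the parity of the bracket table. The main obstacle, absent from the analogous computation in \cite{FP}, is that the non-integrability of $\mathcal{H}$ captured by $[X_i,X_{i+1}]=2\,\partial/\partial\theta$ is the sole source of the mixed components \eqref{eqn:Thm 2.1 eqn 5} and \eqref{eqn:Thm 2.1 eqn 6} and also produces the $W^2$ correction throughout \eqref{eqn:Thm 2.1 eqn 1}--\eqref{eqn:Thm 2.1 eqn 3}; carefully tracking how this bracket propagates through Koszul and through $\nabla_{[Y_i,Y_j]}$, and verifying that the Bianchi-like numerical ratios $-2:1:-1$ of \eqref{eqn:Thm 2.1 eqn 5} come out exactly, is the chief bookkeeping challenge. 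The cleanliness of the final statement ultimately reflects the compatibility of the holomorphic frame with the complex structure $J$.
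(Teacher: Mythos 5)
Your high-level strategy (brackets $\to$ Koszul $\to$ curvature) is the same as the paper's, but as written the computation would not go through, for two related reasons. First, your bracket table is incomplete: in any concrete frame the horizontal vector fields have nontrivial brackets \emph{among themselves}, not just the $\tfrac{2W}{u}Y_{2n-1}$ component coming from non-integrability. In the paper's explicit frame these are, e.g., $[Y_1,Y_2]=\tfrac{c}{uab}Y_3+\tfrac{2W}{u}Y_5$, $[Y_1,Y_3]=\tfrac{b}{uac}Y_2$, $[Y_1,Y_4]=\tfrac{b}{ua}Y_1$, with $a=\cosh\sigma$, $b=\sinh\sigma$, $c=\cosh 2\sigma$. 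These horizontal--horizontal brackets are precisely how the curvature of the base $\cm_{n-1}$ enters a direct frame computation; drop them and, for instance, $\nabla_{Y_1}Y_1$ loses its $-\tfrac{b}{ua}Y_4$ term and $R^\l_{1,4,1,4}$ comes out as $-W^2/u^2$ instead of $-(1+W^2)/u^2$. You assert separately that ``the rescaled curvature of $\cm_{n-1}$ contributes $-4/u^2$ (resp.\ $-1/u^2$),'' but your framework provides no mechanism to produce that contribution, nor to produce the purely horizontal mixed terms \eqref{eqn:Thm 2.1 eqn 5}, which in the paper arise from exactly these brackets and a hyperbolic-trig identity. Second, the relation $[X_i,X_{i+1}]_q=2\,\partial/\partial\theta$ of \eqref{eqn:nonzero Lie brackets} holds only at the base point $q$ of the frame construction (where $[\check X_i,\check X_j]_p=0$ was arranged), so it cannot be differentiated; computing $\nabla_{Y_i}\nabla_{Y_j}Y_k$ requires the connection as a field in a neighborhood.

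The paper resolves both issues by (i) first arguing that every potentially nonzero mixed term involves at most two holomorphic pairs, so one may reduce to $n=3$, and then (ii) importing from \cite{minemyer2018real} an explicit global frame for $\C\H^2$ in polar coordinates about a totally real $\H^2$, whose Lie brackets are known everywhere as functions of $\sigma$. The $a,b,c$ coefficients then propagate through the Koszul formula into the connection and only collapse to the constants $-4$, $-1$, $-2$ at the end via hyperbolic identities. To repair your argument you would either need to carry out this explicit-frame computation, or genuinely reformulate the problem as a Riemannian submersion and invoke the O'Neill formulas (base curvature plus $A$-tensor plus fiber data), which would make your ``base contributes $-4/u^2$ or $-1/u^2$'' step legitimate; at present the proposal sits between the two and neither route is actually executed.
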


It is a simple calculation to check that, when $W = \sqrt{u^2 - 1}$, the formulas in Theorem \ref{thm:curvature formulas} reduce to either $-4$, $-1$, or $-2$ respectively (see equations $(4.4)$ through $(4.6)$ of \cite{Minemyercurvatureformulas} for the correct curvature values with respect to $\cm_n$).

The remainder of this section is devoted to proving Theorem \ref{thm:curvature formulas}.  
The method used in proving Proposition 3.2 of \cite{FP} seems significantly more complicated due to the non-integrability of $E$, which is expressed by the Lie brackets in \eqref{eqn:nonzero Lie brackets} and contributes to the nonzero mixed terms of $R^\l$ listed above.
Additionally, the formulas developed by Belegradek in \cite{belegradek2012complex} do not seem to work due to the $1/W^2$ term in front of the $du^2$.  
Thus, we calculate the formulas for $R^\l$ directly.  
How we proceed is that we first explain why we can restrict to $n=3$.  
Then, using the results of \cite{minemyer2018real}, we calculate formulas for the Lie brackets for our basis.  
Finally, using these Lie bracket formulas we calculate formulas for the Levi-Civita connection, and then use the connection to calculate the above formulas for the Riemann curvature tensor.  
The reader who is uninterested in these calculations and only cares about the use of Theorem \ref{thm:curvature formulas} can skip to Section \ref{sect: approximate Einstein metric}.

\subsection{Reduction to $n=3$}
Curvature formulas for $\cm_n$ in equation \eqref{eqn:c metric} with respect to a holomorphic frame are worked out in \cite{belegradek2012complex} and \cite{Minemyercurvatureformulas}.  
In making the substitution $u =\cosh(r)$ to obtain \eqref{eqn:u coord change metric}, the only change in our implicit frame is that $\partial / \partial u$ is now a (positive) scaled copy of $\partial / \partial r$. 
So, in particular, equations \eqref{eqn:ON basis 1} through \eqref{eqn: ON basis 3} still provide a holomorphic frame for \eqref{eqn:u coord change metric}.  

A {\it mixed term} of a Riemann curvature tensor, with respect to a given fixed orthonormal frame, is any term that does not correspond to the sectional curvature of a coordinate plane with respect to the given frame.  
Equivalently, this is any term that is not of the form $R_{i, j, i, j}$ for some $i$ and $j$.
With respect to a holomorphic frame, a mixed term $R^{\cm_n}_{a,b,c,d}$ of the curvature tensor $R^{\cm_n}$ of $\cm_n$ is nonzero only when the respective vectors $\{ Y_a, Y_b, Y_c, Y_d \}$ contain exactly two holomorphic pairs.  
In other words, if $\{ Y_a, Y_b, Y_c, Y_d \}$ is not of the form $\{ Y_i, Y_{i+1}, Y_j, Y_{j+1} \}$ for some $i, j$ odd, then the mixed term $R^{\cm_n}_{a,b,c,d} = 0$.
This property is then inherited by the warped-product metric $\l$ from \eqref{eqn:lambda}.  

There are two distinct ways in which the collection $\{ Y_i, Y_{i+1}, Y_j, Y_{j+1} \}$ can contain two holomorphic pairs:
\begin{enumerate}
	\item Both $1 \leq i, j \leq 2n-2$.  In this case, both pairs are contained in the horizontal distribution $\mathcal{H}$.  
	\item $i = 2n-1$ or $j = 2n-1$.  In this case, one pair comes from $\mathcal{H}$ and the other pair has the same span as $\{ \partial / \partial \th , \partial / \partial u \}$.  
\end{enumerate}
All of these cases are covered when the complex dimension of $\mathcal{H}$ is at least two or, equivalently, when $n \geq 3$.  
Therefore, to simplify calculations below, we will restrict to $n=3$ which, again, covers all possible nonzero values for $R^\l$.
Note that, when $n=3$, the horizontal distribution $\mathcal{H}$ is a scaled copy of $\C \H^2$.

\subsection{A frame and Lie Brackets for $\C \H^2$}
Direct calculations of curvature formulas in complex hyperbolic space can be very difficult.  
In \cite{minemyer2018real} the second author gave a direct calculation to obtain curvature formulas for $\C \H^2$ with respect to polar coordinates about a totally geodesic (and totally real)  copy of the hyperbolic plane $\H^2$.  
Using \cite{minemyer2018real} seems to be the easiest way to calculate the curvatures in Theorem \ref{thm:curvature formulas}.
In this subsection we give a quick overview of the necessary results from \cite{minemyer2018real}.
Note that in \cite{minemyer2018real} curvatures were scaled to lie in $[-1, -1/4]$.  
These formulas were scaled for curvatures in $[-4,-1]$ in \cite{Minemyercurvatureformulas}.

To avoid confusion\footnote{The variables $(\th, r)$ were used in \cite{minemyer2018real}, but these variables were already used in \eqref{eqn:c metric} above.}, we will use $(\tau, \s)$ to denote polar coordinates in $\C \H^2$ about $\H^2$.  
The horizontal distribution in this setting is spanned by two vectors denoted $S$ and $T$, where $S$ is parallel to $ J (\partial / \partial \tau)$ and $T$ is parallel to $J(\partial / \partial \s)$.  
The complex hyperbolic metric $\cm_2$ can then be written as
\begin{equation*}
\cm_2 = \cosh^2(\s) dS^2 + \sinh^2(\s) d \tau^2 + \cosh^2(2\s) dT^2 + d \sigma^2.
\end{equation*}
Let
\begin{equation}\label{eqn:ch2 X check frame}
\check{X}_1 = S \qquad \check{X}_2 = \frac{\partial}{\partial \t} \qquad \check{X}_3 = T \qquad \check{X}_4 = \frac{\partial}{\partial \s}.	
\end{equation}
It should be noted that this swaps the definitions of $X_1$ and $X_2$ from \cite{minemyer2018real}.  
The nonzero Lie brackets with respect to this frame are
\begin{equation}\label{eqn:ch2 X check Lie brackets}
[\check{X}_1, \check{X}_2]_{\cm_2} = \check{X}_3 \qquad [\check{X}_1, \check{X}_3]_{\cm_2} = \check{X}_2 \qquad [\check{X}_2, \check{X}_3]_{\cm_2} = \check{X}_1.	
\end{equation}
Notice that we have included a $\cm_2$ subscript to distinguish these Lie brackets from the Lie brackets for $\l$ in the following subsection.
We now define our orthonormal frame for $\cm_2$ by
\begin{equation}\label{eqn:ch2 X frame}
X_1 = \frac{1}{\cosh(\s)} \check{X}_1, \quad X_2 = \frac{1}{\sinh(\s)} \check{X}_2, \quad X_3 = \frac{1}{\cosh(2 \s)} \check{X}_3, \quad X_4 = \check{X}_4.
\end{equation}
A direct calculation, where one remembers that $X_4 = \partial / \partial \s$, gives
\begin{align}
&[X_1, X_2]_{\cm_2} = \frac{\cosh(2 \s)}{\sinh(\s) \cosh(\s)} X_3 \qquad [X_1, X_3]_{\cm_2} = \frac{\sinh(\s)}{\cosh(\s)\cosh(2\s)} X_2  \label{eqn:ch2 X Lie brackets 1}\\
&[X_1, X_4]_{\cm_2} = \frac{\sinh(\s)}{\cosh(\s)} X_1 \qquad \hskip 34pt [X_2, X_3]_{\cm_2} = \frac{\cosh(\s)}{\sinh(\s)\cosh(2\s)} X_1 \\
&[X_2, X_4]_{\cm_2} = \frac{\cosh(\s)}{\sinh(\s)} X_2 \qquad \hskip 34pt [X_3, X_4]_{\cm_2} = \frac{2 \sinh(2\s)}{\cosh(2 \s)} X_3.  \label{eqn:ch2 X Lie brackets 3}
\end{align}

\subsection{Lie brackets for the frame \eqref{eqn:ON basis 1} through \eqref{eqn: ON basis 3}}
We now return our attention to the metric $\l$ from \eqref{eqn:lambda} with the simplifying assumption that $n=3$.  
We will use the basis $(X_1, X_2, X_3, X_4)$ from \eqref{eqn:ch2 X frame} as our orthogonal frame for the horizontal distribution $\mathcal{H}$.
We extend this to an orthogonal basis for $\l$ by setting
\begin{equation*}
X_5 = \ddt \qquad X_6 = \frac{\partial}{\partial u}.
\end{equation*}
Recall from equation \eqref{eqn:nonzero Lie brackets} that, since $(X_1, X_2)$ and $(X_3, X_4)$ form holomorphic pairs, both $[X_1, X_2]$ and $[X_3, X_4]$ have an $X_5$ component of $2 X_5$.

Define the corresponding orthonormal basis $(Y_i)$ for $\l$ by
\begin{equation}\label{eqn:lambda ON basis}
Y_i = \frac{1}{u} X_i \quad (1 \leq i \leq 4) \qquad Y_5 = \frac{1}{uW} X_5 \qquad Y_6 = W X_6.
\end{equation}
A direct calculation, using equations \eqref{eqn:ch2 X Lie brackets 1} through \eqref{eqn:ch2 X Lie brackets 3} and remembering that $Y_6 = W (\partial / \partial u)$, yields the following Lie brackets.  
For ease of notation we make the substitutions $a = \cosh(\s)$, $b = \sinh(\s)$, and $c = \cosh(2 \s)$.  

\begin{proposition}\label{prop:lambda Lie brackets}
The values for the Lie brackets of $\l$ with respect to the frame given in \eqref{eqn:lambda ON basis} are
\begin{align*}
&[Y_1, Y_2] = \frac{c}{uab} Y_3 + \frac{2W}{u} Y_5 \qquad [Y_2, Y_3] = \frac{a}{ubc} Y_1 \qquad \hskip 40pt [Y_3, Y_5] = 0  \\
&[Y_1, Y_3] = \frac{b}{uac} Y_2 \qquad \hskip 40pt [Y_2, Y_4] = \frac{a}{ub}Y_2 \qquad \hskip 46pt [Y_3, Y_6] = \frac{W}{u} Y_3  \\
&[Y_1, Y_4] = \frac{b}{ua} Y_1 \qquad \hskip 44pt [Y_2, Y_5] = 0 \qquad \hskip 64pt [Y_4, Y_5] = 0  \\
&[Y_1, Y_5] = 0 \qquad \hskip 62pt [Y_2, Y_6] = \frac{W}{u} Y_2 \qquad \hskip 48pt [Y_4, Y_6] = \frac{W}{u} Y_4  \\
&[Y_1, Y_6] = \frac{W}{u} Y_1 \qquad \hskip 44pt [Y_3, Y_4] = \frac{4ab}{uc} Y_3 + \frac{2W}{u} Y_5 \qquad [Y_5, Y_6] = \frac{W + u W'}{u} Y_5
\end{align*}
\end{proposition}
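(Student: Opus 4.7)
The plan is to apply the derivation identity
\[
[fU, gV] = fg[U,V] + f(Ug)V - g(Vf)U
\]
to the rescaled frame \eqref{eqn:lambda ON basis}, so everything reduces to the brackets among the unscaled vector fields $X_1,\dots,X_6$ together with the action of each $X_i$ on the scalar prefactors $1/u$, $1/(uW)$, and $W$. Since Lie brackets are independent of the choice of metric, I can freely borrow the $\mathfrak{c}_2$-brackets from \eqref{eqn:ch2 X Lie brackets 1}--\eqref{eqn:ch2 X Lie brackets 3} when I work on $\mathcal{H}$.

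The brackets among $X_1,\dots,X_6$ split into three cases. For $1\leq i,j\leq 4$, the bracket $[X_i,X_j]$ in $\mathbb{C}\mathbb{H}^3$ has a horizontal component equal to the $\varphi^{-1}$-lift of $[\check X_i,\check X_j]_{\mathfrak{c}_2}$ (computed by \eqref{eqn:ch2 X Lie brackets 1}--\eqref{eqn:ch2 X Lie brackets 3}) together with a vertical component coming from the non-integrability of $\mathcal{H}$: by \eqref{eqn:nonzero Lie brackets}, this vertical component is $2X_5=2\partial/\partial\theta$ for the holomorphic pairs $(X_1,X_2)$ and $(X_3,X_4)$ and vanishes otherwise. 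For brackets involving $X_5$ or $X_6$, the horizontal lifts $X_1,\dots,X_4$ are $\varphi$-related to vector fields on $\mathbb{C}\mathbb{H}^{n-1}$ and are therefore invariant under both the $\theta$-rotation and translation in $u$, so $[X_i,X_5]=[X_i,X_6]=0$ for $1\leq i\leq 4$, and plainly $[X_5,X_6]=0$.

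With these inputs the computation becomes bookkeeping. Because the prefactors $1/u$, $1/(uW)$, and $W$ depend only on $u$, and because $X_1,\dots,X_5$ all annihilate any function of $u$ alone, most of the $f(Ug)V$ and $g(Vf)U$ correction terms drop out. The surviving contributions are: (i) for $[Y_i,Y_j]$ with $1\leq i<j\leq 4$, one simply divides $[X_i,X_j]$ by $u^2$ and re-expresses the result via $X_k=uY_k$ and $X_5=uWY_5$, recovering both the horizontal $\mathfrak{c}_2$-term and the $(2W/u)Y_5$ contribution in the two holomorphic cases; (ii) for $[Y_i,Y_6]$, the only nonzero piece comes from $X_6=\partial/\partial u$ differentiating the prefactor of $Y_i$, producing $(W/u)Y_i$ when $1\leq i\leq 4$ and $((W+uW')/u)Y_5$ when $i=5$; (iii) every other bracket is zero.

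The one place requiring a little care is verifying that $(X_1,X_2)$ and $(X_3,X_4)$ in the frame \eqref{eqn:ch2 X frame} really are holomorphic pairs in the sense of \eqref{eqn:nonzero Lie brackets}, so that the vertical $2X_5$ contribution genuinely appears in $[X_1,X_2]$ and $[X_3,X_4]$. This is built into the construction \eqref{eqn:ch2 X check frame}: $\check X_1=S$ is parallel to $J(\partial/\partial\tau)=J\check X_2$ and $\check X_3=T$ is parallel to $J(\partial/\partial\sigma)=J\check X_4$, so after normalizing in \eqref{eqn:ch2 X frame} the pairs $(X_1,X_2)$ and $(X_3,X_4)$ become holomorphic pairs. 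Once this is in hand, the stated formulas follow by direct substitution with $a=\cosh(\sigma)$, $b=\sinh(\sigma)$, $c=\cosh(2\sigma)$.
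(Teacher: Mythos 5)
Your proposal is correct and follows essentially the same route as the paper: the paper computes $[Y_1,Y_2]$ via $\frac{1}{u^2}[X_1,X_2]=\frac{1}{u^2}\bigl(\frac{c}{ab}X_3+2X_5\bigr)$ (the $\cm_2$-bracket lift plus the non-integrability term $2X_5$) and $[Y_5,Y_6]$ via the $\partial/\partial u$-derivative of the prefactor $1/(uW)$, then leaves the remaining brackets to the reader as the same bookkeeping you describe. Your explicit check that $(X_1,X_2)$ and $(X_3,X_4)$ are holomorphic pairs, and your observation that the prefactors depend only on $u$ so only $X_6$ produces correction terms, are exactly the facts the paper uses implicitly.
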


\begin{proof}
To demonstrate the techniques used to calculate these Lie brackets, we calculate $[Y_1, Y_2]$ and $[Y_5, Y_6]$ and leave the remainder to the reader.  
For this first Lie bracket, we have
\begin{equation*}
[Y_1, Y_2] = \frac{1}{u^2} [X_1, X_2] = \frac{1}{u^2} \left( \frac{c}{ab} X_3 + 2 X_5 \right) = \frac{c}{uab} Y_3 + \frac{2W}{u} Y_5.
\end{equation*}
For the second Lie bracket, we have to remember to calculate the partial derivative since $Y_6 = W (\partial / \partial u)$.  
Thus, we obtain
\begin{align*}
[Y_5, Y_6] &= W \left[ \frac{1}{uW} X_5 , \frac{\partial}{\partial u} \right] = - W \frac{\partial}{\partial u} \left( \frac{1}{uW} \right) X_5 = - W \left( - \frac{W + uW'}{u^2 W^2} \right) X_5 \\
&= \frac{W + u W'}{u} Y_5 .
\end{align*}
\end{proof}

\subsection{The Levi-Civita connection and the proof of Theorem \ref{thm:curvature formulas}}
Recall that, in the special case of an orthonormal basis, the Koszul formula reduces to 
\begin{equation*}
\l(\nabla_Y X, Z) = - \frac{1}{2} \biggl( \l \big([X,Z],Y\big) + \l \big([Y,Z],X\big) + \l \big([X,Y], Z \big) \biggr).
\end{equation*}
Combining this equation with the formulas in Proposition \ref{prop:lambda Lie brackets} provides the following $36$ components of the Levi-Civita connection.

\begin{proposition}\label{prop:connection 1}
The Levi-Civita connection $\nabla$ compatible with $\l$ is determined by the following $36$ equations.
	\begin{align*}
	\bullet \nabla_{Y_1}Y_1 = & - \frac{b}{ua} Y_4 - \frac{W}{u} Y_6 \hskip 56pt \bullet \nabla_{Y_1}Y_2 =  - \frac{1}{2u}\left( \frac{a}{bc} + \frac{b}{ac} - \frac{c}{ab} \right) Y_3 + \frac{W}{u} Y_5    \\
	\bullet \nabla_{Y_1}Y_3 = & - \frac{1}{2u} \left( - \frac{a}{bc} - \frac{b}{ac} + \frac{c}{ab} \right) Y_2  \hskip 124pt	  \bullet \nabla_{Y_1}Y_4 = \frac{b}{ua} Y_1  \\
	\bullet \nabla_{Y_2}Y_1 = & - \frac{1}{2u} \left( \frac{a}{bc} + \frac{b}{ac} + \frac{c}{ab} \right) Y_3 - \frac{W}{u} Y_5    \hskip 54pt  \bullet \nabla_{Y_2}Y_2 = - \frac{a}{ub} Y_4 - \frac{W}{u} Y_6   \\
	\bullet \nabla_{Y_2}Y_3 = & - \frac{1}{2u} \left( - \frac{a}{bc} - \frac{b}{ac} - \frac{c}{ab} \right) Y_1  \hskip 126pt  \bullet \nabla_{Y_2}Y_4 = \frac{a}{ub} Y_2   \\ 
	\bullet \nabla_{Y_3}Y_1 = & - \frac{1}{2u} \left( - \frac{a}{bc} + \frac{b}{ac} + \frac{c}{ab} \right) Y_2	\hskip 40pt  \bullet \nabla_{Y_3}Y_2 = - \frac{1}{2u} \left( \frac{a}{bc} - \frac{b}{ac} - \frac{c}{ab} \right) Y_1    \\
	\bullet \nabla_{Y_3}Y_3 = & - \frac{4ab}{uc}Y_4 - \frac{W}{u} Y_6  \hskip 132pt  \bullet \nabla_{Y_3}Y_4 = \frac{4ab}{uc} Y_3 + \frac{W}{u} Y_5    \\
	\bullet \nabla_{Y_4}Y_1 = & 0   \hskip 32pt  \bullet \nabla_{Y_4}Y_2 = 0 \hskip 32pt \bullet \nabla_{Y_4}Y_3 = - \frac{W}{u} Y_5  \hskip 32pt  \bullet \nabla_{Y_4}Y_4 = - \frac{W}{u} Y_6  \\
	\bullet \nabla_{Y_1}Y_5 = & - \frac{W}{u} Y_2  \hskip 18pt  \bullet \nabla_{Y_2}Y_5 = \frac{W}{u} Y_1 \hskip 18pt  \bullet \nabla_{Y_3}Y_5 = - \frac{W}{u} Y_4 \hskip 18pt  \bullet \nabla_{Y_4}Y_5 = \frac{W}{u} Y_3  \\
	\bullet \nabla_{Y_1}Y_6 = &\frac{W}{u} Y_1	\hskip 25pt	\bullet \nabla_{Y_2}Y_6 = \frac{W}{u} Y_2	\hskip 25pt	\bullet \nabla_{Y_3}Y_6 = \frac{W}{u}Y_3	\hskip 25pt	\bullet \nabla_{Y_4}Y_6 = \frac{W}{u} Y_4  \\
	\bullet \nabla_{Y_5}Y_1 = & - \frac{W}{u} Y_2 	\hskip 18pt  \bullet \nabla_{Y_5}Y_2 = \frac{W}{u} Y_1  \hskip 18pt \bullet \nabla_{Y_5}Y_3 = - \frac{W}{u} Y_4  \hskip 18pt \bullet \nabla_{Y_5}Y_4 = \frac{W}{u} Y_3  \\
	\bullet \nabla_{Y_5}Y_5 = & - \left( \frac{W + uW'}{u} \right) Y_6 \hskip 106pt 	\bullet \nabla_{Y_5}Y_6 = \frac{W + uW'}{u} Y_5  \\ 
	\bullet \, &0 = \nabla_{Y_6}Y_1 = \nabla_{Y_6}Y_2 = \nabla_{Y_6}Y_3 = \nabla_{Y_6}Y_4 = \nabla_{Y_6}Y_5 = \nabla_{Y_6}Y_6 
	\end{align*}
\end{proposition}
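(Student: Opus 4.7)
The plan is to compute each of the $36$ connection coefficients by applying the Koszul formula displayed immediately above the proposition to every ordered pair $(Y_i, Y_j)$, and expanding the result in the orthonormal basis via $\nabla_{Y_i} Y_j = \sum_k \l(\nabla_{Y_i} Y_j, Y_k)\, Y_k$. Since $(Y_1, \ldots, Y_6)$ is $\l$-orthonormal, the right hand side of Koszul collapses to a sum of three Lie bracket coefficients, each of which is already tabulated in Proposition \ref{prop:lambda Lie brackets}. Every case therefore reduces to reading off and combining at most three entries from that table.

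Before grinding out the cases I would record two structural shortcuts that roughly halve the bookkeeping. Metric compatibility gives
\[
\l(\nabla_{Y_i} Y_j, Y_k) = -\l(\nabla_{Y_i} Y_k, Y_j),
\]
so each index triple need only be computed in one ordering. Torsion-freeness gives
\[
\nabla_{Y_i} Y_j - \nabla_{Y_j} Y_i = [Y_i, Y_j],
\]
which both determines $\nabla_{Y_j} Y_i$ from $\nabla_{Y_i} Y_j$ and provides a cross-check against Proposition \ref{prop:lambda Lie brackets}. Combined with the sparsity of the Lie bracket table, these shortcuts handle most entries immediately. The vertical direction $Y_6 = W\tfrac{\partial}{\partial u}$ is especially clean because $[Y_i, Y_6]$ lies in the span of $Y_i$ for $1 \le i \le 4$ and in the span of $Y_5$ for $i = 5$, which forces $\nabla_{Y_6} Y_j = 0$ for every $j$ and yields the formulas $\nabla_{Y_i} Y_6 = \tfrac{W}{u} Y_i$. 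The mixed entries involving $Y_5 = \tfrac{1}{uW}\ddt$ are similar, since among the horizontal brackets only $[Y_1, Y_2]$ and $[Y_3, Y_4]$ contribute a $Y_5$ term.

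The one block where all three Koszul terms genuinely contribute is the purely horizontal one, namely the connection restricted to $\{Y_1, Y_2, Y_3, Y_4\}$, and this is where I expect the main obstacle to lie. The resulting coefficients are asymmetric combinations of the form $\pm\tfrac{a}{bc} \pm \tfrac{b}{ac} \pm \tfrac{c}{ab}$, whose sign pattern varies from triple to triple and must be tracked together with the hyperbolic identities $c = a^2 + b^2$ and $2ab = \sinh(2\s)$. No single computation is conceptually hard, but the proposition packages $36$ of them, so I would present one representative horizontal calculation, say $\nabla_{Y_1} Y_2$, in full detail, then list the remaining horizontal coefficients and invoke the two shortcuts above for the mixed and vertical entries, assembling the output as the table given in the statement.
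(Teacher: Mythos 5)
Your proposal is correct and follows essentially the same route as the paper: the paper likewise obtains all $36$ components by combining the reduced Koszul formula for an orthonormal frame with the Lie bracket table of Proposition \ref{prop:lambda Lie brackets}. Your additional shortcuts (skew-symmetry from metric compatibility, torsion-freeness, and the vanishing of $\nabla_{Y_6}$ on the frame) are sound and merely streamline the same computation.
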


\vskip 10pt

Note that the coefficients of the form
\begin{equation*}
\left( \pm \frac{a}{bc} \pm \frac{b}{ac} \pm \frac{c}{ab} \right)
\end{equation*}
that occur in some terms of the connection are expected as they are the same terms that occur (up to scale) in the curvature formulas in \cite{minemyer2018real}.  

\vskip 5pt

With the formulas for the connection, we can now prove Theorem \ref{thm:curvature formulas}.

\begin{proof}[Proof of Theorem \ref{thm:curvature formulas}]
Theorem \ref{thm:curvature formulas} contains six different curvature formulas.  
We will compute one example for each of these six formulas.  
The interested reader can confirm that the remainder of the formulas work in an identical manner, and that all components which are not listed are $0$.  
In what follows we use the notation $\nabla_i := \nabla_{Y_i}$.

To verify \eqref{eqn:Thm 2.1 eqn 1} we will compute the more tedious $R_{1,2,1,2}$ (as opposed to $R_{3,4,3,4}$) and choose the simpler calculations for the other formulas.  
We have
\begin{align*}
&R_{1,2,1,2} = \l \left( \nabla_2 \nabla_1 Y_1 - \nabla_1 \nabla_2 Y_1 + \nabla_{[Y_1, Y_2]} Y_1, Y_2 \right) \\
&= \l \left( \nabla_2 \left( \frac{-b}{ua} Y_4 - \frac{W}{u} Y_6 \right) - \nabla_1 \left( - \frac{1}{2u} \left( \frac{a}{bc} + \frac{b}{ac} + \frac{c}{ab} \right) Y_3 - \frac{W}{u} Y_5 \right) \right. \\
& \hskip 40pt \left. +\frac{c}{uab} \nabla_3 Y_1 + \frac{2W}{u} \nabla_5 Y_1, Y_2 \right) \\
&= \frac{-1}{u^2} - \frac{W^2}{u^2} - \frac{1}{4u^2} \left(\frac{a}{bc} + \frac{b}{ac} + \frac{c}{ab} \right) \left(\frac{-a}{bc} - \frac{b}{ac} + \frac{c}{ab} \right) - \frac{W^2}{u^2} \\
& \hskip 40pt - \frac{c}{2u^2 ab} \left( \frac{-a}{bc} + \frac{b}{ac} + \frac{c}{ab} \right) - \frac{2W^2}{u^2} \\
&= - \frac{1}{u^2} - \frac{4W^2}{u^2} - \frac{1}{4u^2} \left( \frac{-a^4 - b^4 + 3c^4 - 2 a^2 b^2 - 2 a^2 c^2 + 2 b^2 c^2}{a^2 b^2 c^2} \right).
\end{align*}
Substituting $a = \cosh(\s)$, $b = \sinh(\s)$, and $c = \cosh(2 \s)$, it is a tedious\footnote{Making repeated use of the identities $a^2 + b^2 = c$, $b^2 - a^2 = -1$, and $a^2 b^2 = \frac{1}{4} \sinh^2 (2 \s)$.} exercise in hyperbolic trigonometric identities to show that
\begin{equation*}
\frac{-a^4 - b^4 + 3c^4 - 2 a^2 b^2 - 2 a^2 c^2 + 2 b^2 c^2}{a^2 b^2 c^2} = 12.
\end{equation*}
This gives $\ds{R_{1,2,1,2} = -4 \left( \frac{1 + W^2}{u^2} \right)}$.

To prove \eqref{eqn:Thm 2.1 eqn 2} we calculate $R_{2,4,2,4}$, remembering that $\ds{Y_4 = \frac{1}{u} \frac{\partial}{\partial \sigma}}$:
\begin{align*}
&R_{2,4,2,4} = \l \left( \nabla_4 \nabla_2 Y_2 - \nabla_2 \nabla_4 Y_2 + \nabla_{[Y_2, Y_4]} Y_2, Y_4 \right) \\
&= \l \left( \nabla_4 \left( \frac{-a}{ub} Y_4 - \frac{W}{u} Y_6 \right) - 0 + \frac{a}{ub} \nabla_2 Y_2, Y_4 \right) \\
&= \l \left( \frac{1}{u^2 \sinh^2(\s)} Y_4 - \frac{a}{ub} \cdot \frac{-W}{u} Y_6 - \frac{W^2}{u^2} Y_4 - \frac{\cosh^2(\s)}{u^2 \sinh^2(\s)} Y_4 + \frac{a}{ub} \cdot \frac{-W}{u}Y_6, Y_4 \right) \\
&= - \frac{1 + W^2}{u^2}.
\end{align*}

For equation \eqref{eqn:Thm 2.1 eqn 3} we calculate $R_{4,5,4,5}$:
\begin{align*}
&R_{4,5,4,5} = \l \left( \nabla_5 \nabla_4 Y_4 - \nabla_4 \nabla_5 Y_4 + \nabla_{[Y_4, Y_5]} Y_4, Y_5 \right) \\
&= \l \left( \nabla_5 \left( - \frac{W}{u} Y_6 \right) - \nabla_4 \left( \frac{W}{u} Y_3 \right) + 0, Y_5 \right) \\
&= - \frac{W}{u} \left( \frac{W + uW'}{u} \right)- \frac{W}{u} \left( - \frac{W}{u} \right) = - \frac{W W'}{u}.
\end{align*}

Our only option to prove \eqref{eqn:Thm 2.1 eqn 4} is to calculate $R_{5,6,5,6}$.  
To this end, we have
\begin{align*}
&R_{5,6,5,6} = \l \left( \nabla_6 \nabla_5 Y_5 - \nabla_5 \nabla_6 Y_5 + \nabla_{[Y_5, Y_6]} Y_5, Y_6 \right) \\
&= \l \left( \nabla_6 \left( - \frac{W + uW'}{u} Y_6 \right) - 0 + \frac{W + uW'}{u} \nabla_5 Y_5, Y_6 \right) \\
&= -W \left( \frac{u (W' + W' + uW'') - (W + uW')}{u^2} \right) - \left( \frac{W + uW'}{u} \right)^2 \\
&= -3 \frac{W W'}{u} - \left( W W'' + \left( W' \right)^2 \right).
\end{align*}

For the first mixed term \eqref{eqn:Thm 2.1 eqn 5}, we calculate
\begin{align*}
&R_{1,2,3,4} = \l \left( \nabla_2 \nabla_1 Y_3 - \nabla_1 \nabla_2 Y_3 + \nabla_{[Y_1, Y_2]} Y_3, Y_4 \right) \\
&= \l \left( \nabla_2 \left( \frac{-1}{2u} \left( \frac{-a}{bc} - \frac{b}{ac} + \frac{c}{ab} \right) Y_2 \right) - \nabla_1 \left( \frac{-1}{2u} \left( - \frac{a}{bc} - \frac{b}{ac} - \frac{c}{ab} \right) Y_1 \right) \right. \\
& \hskip 40pt \left. + \frac{c}{uab} \nabla_3 Y_3 + \frac{2W}{u} \nabla_5 Y_3, Y_4 \right) \\
&= \l \left( \frac{-1}{2u} \left( \frac{-a}{bc} - \frac{b}{ac} + \frac{c}{ab} \right) \left( \frac{-a}{ub} Y_4 - \frac{W}{u} Y_6 \right) + \frac{1}{2u} \left( - \frac{a}{bc} - \frac{b}{ac} - \frac{c}{ab} \right) \cdot \right. \\
& \hskip 40pt \left. \left(\frac{-b}{ua} Y_4 - \frac{W}{u} Y_6 \right) + \frac{c}{uab} \left( \frac{-4ab}{uc} Y_4 - \frac{W}{u} Y_6 \right) - \frac{2W^2}{u^2} Y_4, Y_4 \right).
\end{align*}
To calculate $R_{1,2,3,4}$, the $Y_6$ terms are irrelevant since $Y_6$ is orthogonal to $Y_4$.  
But notice that, by a direct algebra calculation, the coefficients of the $Y_6$ term sum to $0$ (this is actually needed in the computation $R_{1,2,3,6} = 0$).
Returning our attention to the current calculation, the coefficients of the $Y_4$ term simplify to
\begin{align*}
\frac{1}{2u^2} \left( - \frac{a^2}{b^2 c} + \frac{ac}{ab^2} + \frac{bc}{a^2 b} + \frac{b^2}{a^2 c} \right) - \frac{4}{u^2} - 2 \frac{W^2}{u^2}.
\end{align*}
Another exercise in hyperbolic trig identities shows that
\begin{equation*}
- \frac{a^2}{b^2 c} + \frac{ac}{ab^2} + \frac{bc}{a^2 b} + \frac{b^2}{a^2 c} = 4
\end{equation*}
which verifies equation \eqref{eqn:Thm 2.1 eqn 5}.

Finally, to prove \eqref{eqn:Thm 2.1 eqn 6} we calculate
\begin{align*}
&R_{3,4,5,6} = \l \left( \nabla_4 \nabla_3 Y_5 - \nabla_3 \nabla_4 Y_5 + \nabla_{[Y_3, Y_4]} Y_5, Y_6 \right) \\
&= \l \left( \nabla_4 \left( - \frac{W}{u} Y_4 \right) - \nabla_3 \left( \frac{W}{u} Y_3 \right) + \frac{4ab}{uc} \nabla_3 Y_5 + \frac{2W}{u} \nabla_5 Y_5, Y_6 \right) \\
&= \l \left( \frac{W^2}{u^2} Y_6 - \frac{W}{u} \left( \frac{-4ab}{uc} Y_4 - \frac{W}{u} Y_6 \right) + \frac{4ab}{uc} \cdot \frac{-W}{u} Y_4 \right. \\
& \hskip 40pt \left. + \frac{2W}{u} \left( - \frac{W + uW'}{u} \right)Y_6, Y_6 \right) \\
&= -2 \frac{W W'}{u}.
\end{align*}

\end{proof}

\section{The Model Einstein Metric}\label{sect: approximate Einstein metric}
This section very closely mirrors Section 3 of \cite{FP}. 
In this section we determine the collection of all functions $V$ for which $\l$ is Einstein with constant $-(2n+2)$.  
This collection of functions $V$ is parameterized by a choice of constant $\k$.  
We analyze the various cone angles that are created for different choices of $\k$, we prove that $\l_\k$ is equal to $\omega_\a$ from \cite{GH}, and we give an independent proof that $\l_\k$ is negatively curved. 

\subsection{Ricci Tensor and the family of model Einstein metrics}
First recall that, given a point $q \in M$ and an orthonormal basis $(Y_i)_{i=1}^{2n}$ of $T_qM$, the Ricci tensor with respect to the metric $\l$ is the function $\text{Ric}_q: T_q M \times T_qM \to \R$ given by
\begin{equation*}
\text{Ric}_q (A,B) = \sum_{i=1}^{2n} \l_q \left( R(A, Y_i) B, Y_i \right).
\end{equation*}
When the point $q$ is understood from context, we will omit it from the notation.

A direct calculation proves the following.  

\begin{proposition}
With respect to the orthonormal basis $(Y_i)_{i=1}^{2n}$ defined in equations \eqref{eqn:ON basis 1} through \eqref{eqn: ON basis 3}, the values of the Ricci tensor $\text{Ric}$ for $\l$ are
\begin{align*}
&\text{Ric}(Y_i, Y_i) = -2n \left( \frac{1+W^2}{u^2} \right) - 2 \frac{W W'}{u} \quad \text{ for } 1 \leq i \leq 2n-2  \\
&\text{Ric}(Y_{2n-1}, Y_{2n-1}) = \text{Ric}(Y_{2n}, Y_{2n}) = - (2n+1) \frac{W W'}{u} - \left( W W'' + \left( W' \right)^2 \right) \\
& \text{Ric}(Y_i, Y_j) = 0 \quad \text{ for } i \neq j.
\end{align*}
\end{proposition}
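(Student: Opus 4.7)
The plan is to compute each Ricci entry by expanding the trace
$$\text{Ric}(Y_a, Y_b) = \sum_{k=1}^{2n} R^\l_{a,k,b,k}$$
in the orthonormal frame from \eqref{eqn:ON basis 1}--\eqref{eqn: ON basis 3} and substituting the curvature values furnished by Theorem \ref{thm:curvature formulas}. The computation splits naturally into three cases: diagonal entries indexed by a horizontal direction $1 \le j \le 2n-2$, diagonal entries indexed by $j \in \{2n-1, 2n\}$, and off-diagonal entries.

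For a horizontal index $j$, I would partition the $k$-sum by type. Exactly one $k \in \{1,\dots,2n-2\}\setminus\{j\}$ is the holomorphic partner of $Y_j$, contributing $-4(1+W^2)/u^2$ by \eqref{eqn:Thm 2.1 eqn 1}; each of the remaining $2n-4$ horizontal values of $k$ contributes $-(1+W^2)/u^2$ by \eqref{eqn:Thm 2.1 eqn 2}; and $k=2n-1$, $k=2n$ each contribute $-WW'/u$ by \eqref{eqn:Thm 2.1 eqn 3}. Summing,
$$\text{Ric}(Y_j, Y_j) = -\bigl(4 + (2n-4)\bigr)\frac{1+W^2}{u^2} - 2\frac{WW'}{u} = -2n\frac{1+W^2}{u^2} - 2\frac{WW'}{u},$$
as claimed. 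For $j \in \{2n-1, 2n\}$, the single sectional contribution coming from inside the pair $\{Y_{2n-1}, Y_{2n}\}$ is $-3WW'/u - (WW'' + (W')^2)$ by \eqref{eqn:Thm 2.1 eqn 4}, while the $2n-2$ horizontal directions each supply $-WW'/u$ by \eqref{eqn:Thm 2.1 eqn 3}; collecting gives $-(2n+1)WW'/u - (WW'' + (W')^2)$.

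For the off-diagonal entries I would observe that every summand $R^\l_{a,k,b,k}$ with $a \neq b$ involves the vector multiset $\{Y_a, Y_k, Y_b, Y_k\}$, which contains at most three distinct indices. As emphasized just before Subsection 2.3, the nonzero mixed components of $R^\l$ tabulated in \eqref{eqn:Thm 2.1 eqn 5}--\eqref{eqn:Thm 2.1 eqn 6} demand that the four arguments form two \emph{disjoint} holomorphic pairs $\{Y_i, Y_{i+1}, Y_j, Y_{j+1}\}$, and this is incompatible with any index repetition. Each such summand therefore vanishes, forcing $\text{Ric}(Y_a, Y_b) = 0$.

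The calculation is essentially mechanical; the main obstacle is bookkeeping, since one must correctly identify the holomorphic partner of $Y_j$ according to the parity of $j$ and apply the curvature symmetries $R^\l_{a,b,c,d} = R^\l_{c,d,a,b} = -R^\l_{b,a,c,d}$ when reading off the mixed-term values in Theorem \ref{thm:curvature formulas}, which are stated only for selected parity patterns.
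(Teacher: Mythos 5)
Your proposal is correct and follows essentially the same route as the paper's proof: expand the trace over the orthonormal frame, count the contributions from the holomorphic partner, the remaining horizontal directions, and the $\{Y_{2n-1},Y_{2n}\}$ pair, and observe that every off-diagonal summand $R^\l_{a,k,b,k}$ has a repeated index and so cannot match any nonzero mixed component of Theorem \ref{thm:curvature formulas}. Your explicit justification of the vanishing of the off-diagonal terms is a slightly fuller version of the paper's one-line remark, but the argument is the same.
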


\begin{proof}
First note that, up to the symmetries of the curvature tensor, there are no nonzero terms in Theorem \ref{thm:curvature formulas} of the form $R^{\l}_{i,k,j,k}$ with $i \neq j$.  
This proves that the Ricci tensor is diagonal (the last equation in the Proposition), which is not immediately obvious since the Riemann curvature tensor contains nonzero mixed terms.  

For the first equation in the Proposition, we have in the $i$ odd case
\begin{align*}
&\text{Ric}(Y_i, Y_i) = \sum_{k=1}^{2n} \l \left( R(Y_i, Y_k) Y_i, Y_k \right)  \\
&= R^\l_{i, i+1, i, i+1} + \sum_{k=1, k \neq i, i+1}^{2n-2} \left( R^\l_{i, k, i, k} \right) + R^\l_{i, 2n-1, i, 2n-1} + R^\l_{i, 2n, i, 2n} \\
&= -4 \left( \frac{1 + W^2}{u^2} \right) - (2n-4) \left( \frac{1 + W^2}{u^2} \right) - 2 \frac{W W'}{u} \\
&= -2n \left( \frac{1 + W^2}{u^2} \right) - 2 \frac{W W'}{u}.
\end{align*}
The $i$ even case is identical, and the middle equation is an equally straightforward calculation.
\end{proof}

A metric $\l$ is {\it Einstein} with Einstein constant $C$ if
\begin{equation*}
	\text{Ric}_q(A,B) = C \cdot \l_q(A, B)
\end{equation*}
for all $A, B \in T_q M$ and for all $q \in M$.
Also, recall that the metric $\l$ from \eqref{eqn:lambda V} is a function of $V(u)$ where we set $W^2 = V$ for convenience in our calculations.
The following Theorem describes exactly the functions $V(u)$ for which $\l$ is Einstein with the same Einstein constant as the complex hyperbolic metric.

\begin{theorem}\label{thm:Einstein functions}
The metric $\l$ from \eqref{eqn:lambda} is Einstein with Einstein constant $-2(n+1)$ exactly when
\begin{equation}\label{eqn:V equation}
	V(u) = u^2 - 1 + \frac{\k}{u^{2n}}
\end{equation}
for $\k \in \R$.
\end{theorem}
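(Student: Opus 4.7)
The plan is to translate the Einstein condition $\text{Ric}(Y_i,Y_i) = -2(n+1)$ into ODEs for $V$ and solve them. Since the preceding proposition shows that $\text{Ric}$ is diagonal in the orthonormal frame $(Y_i)$ and that there are only two distinct diagonal values, the full Einstein equation reduces to just two scalar conditions: one coming from the indices $1 \le i \le 2n-2$, and one from $i = 2n-1, 2n$. Using the substitution $V = W^2$, so that $V' = 2WW'$ and $V'' = 2(WW'' + (W')^2)$, these two conditions become the first-order equation
\begin{equation*}
uV' + 2nV \,=\, 2(n+1)u^2 - 2n
\end{equation*}
and the second-order equation
\begin{equation*}
(2n+1)\frac{V'}{u} + V'' \,=\, 4(n+1).
\end{equation*}

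The key observation I would verify first is that differentiating the first equation in $u$ and then dividing by $u$ produces exactly the second. Consequently, every solution of the first-order ODE automatically solves the second, and the Einstein condition collapses to the single first-order linear ODE above. This reduction is what makes the problem tractable, and it is the only conceptual step in the argument.

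To solve the first-order equation I would apply the standard integrating factor method. Rewriting it as
\begin{equation*}
V' + \frac{2n}{u}V \,=\, 2(n+1)u - \frac{2n}{u},
\end{equation*}
the integrating factor is $u^{2n}$, which converts the equation to $(u^{2n}V)' = 2(n+1)u^{2n+1} - 2n\, u^{2n-1}$. Integrating both sides yields $u^{2n}V = u^{2n+2} - u^{2n} + \k$, and dividing through by $u^{2n}$ gives the desired formula $V(u) = u^2 - 1 + \k/u^{2n}$, with $\k \in \R$ as the constant of integration.

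I do not expect any real obstacle: once one spots that the second Einstein equation is the $u$-derivative of the first, everything else is a routine integrating-factor computation. As a consistency check, the value $\k = 0$ recovers $V = u^2 - 1$, which matches equation \eqref{eqn:u coord change metric} so that $\l$ reduces to $\cm_n$ itself --- in agreement with the fact that the complex hyperbolic metric is K\"ahler-Einstein with Einstein constant $-2(n+1)$.
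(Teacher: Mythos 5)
Your proposal is correct and follows essentially the same route as the paper: reduce the Einstein condition to the two scalar ODEs coming from the diagonal Ricci entries, observe that the second is the $u$-derivative of the first (so only the first-order linear equation needs to be solved), and then apply the integrating factor $u^{2n}$ to obtain $V = u^2 - 1 + \k/u^{2n}$. The only cosmetic difference is that you phrase everything directly in terms of $V$ rather than first writing the equations in $W$ and then substituting $V' = 2WW'$, which changes nothing of substance.
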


\begin{remark}
We are intentionally trying to use the same notation as \cite{FP} for readability.
In \cite{FP} the authors use the variable $a$ for the constant in \eqref{eqn:V equation}.
But we have already used $a = \cosh(\s)$ in the previous section, and so we have changed notation to $\k$ to avoid confusion.  
\end{remark}

\begin{proof}[Proof of Theorem \ref{thm:Einstein functions}]
This argument is essentially identical to that in \cite{FP}.  
We need to solve the system of differential equations
\begin{align}
 -2n \left(1 + W^2 \right) - 2 u W W' &= -2(n+1) u^2 \label{eqn:Einstein equation 1} \\
 -(2n+1) W W' - u \left( W W'' + \left( W' \right)^2 \right) &= -2 (n+1) u \label{eqn:Einstein equation 2}.	
\end{align}
But notice that differentiating both sides of \eqref{eqn:Einstein equation 1} with respect to $u$ gives \eqref{eqn:Einstein equation 2}.
So we really just need to solve \eqref{eqn:Einstein equation 1}.

Observing that $V' = 2 W W'$, equation \eqref{eqn:Einstein equation 1} is equivalent to
\begin{equation*}
-2n \left( 1 + V \right) - u V' = -2 (n+1) u^2.
\end{equation*}
Rearranging terms, we can rewrite this equation as
\begin{equation}\label{eqn:V differential equation}
V' + \frac{2n}{u} V = (2n+2) u - (2n) u^{-1}.
\end{equation}
From here, a simple integrating factor of $u^{2n}$ gives the solution
\begin{equation*} 
V = u^2 - 1 + \frac{\k}{u^{2n}}.
\end{equation*}
\end{proof}

\begin{remark}
Notice that the exponent in the denominator of the last term of $V$ in \eqref{eqn:V equation} is different than the exponent in \cite[Proposition 3.2]{FP}.  If $E$ denotes the Einstein constant for the ambient symmetric space, then it appears that the formula for this exponent is $-(E + 2)$.  
\end{remark}

For $\k \in \R$ we use the notation $V_\k$ to denote the function $V$ from \eqref{eqn:V equation} with constant $\k$, and use $\l_\k$ to denote the metric $\l$ with the choice of $V_\k$.
The metric $\l_\k$ is well-defined for all values $\k$ and $u \geq 1$ such that $V_\k(u) > 0$.

\subsection{Cone angles and curvatures of the model Einstein metric} 
In this subsection we consider the metric $\l_\k$ written in polar coordinates on $\R^{2n} = \R^{2n-2} \times \S^1 \times [0, \infty)$. 
When $\k = 0$ the metric $\l_\k = \l_0 = \cm_n$ and thus defines a smooth metric on $\R^{2n}$.  
But when $\k \neq 0$ this metric will have a cone angle about $\R^{2n-2}$.  
The following Lemma, which is mostly the same as from \cite{FP} with the substitution $n-3 = 2m$, quantifies how this cone angle depends on $\k$.  

In what follows we let $u_\k$ denote the largest root of $V_\k$. 
Note that when $u_\k > 0$, the metric $\l_\k$ is well-defined for all $u \in (u_\k, \infty)$.  
We also define $f(u) = (1-u^2) u^{2n}$ and observe that roots of $V_\k$ are equivalent to solutions of the equation $f(u) = \k$.

\begin{lemma}\label{lem:cone angle lemma}
Let
\begin{equation*}
v = \sqrt{\frac{n}{n+1}} \qquad \k_{\text{max}} = \frac{1}{n+1} v^{2n} \qquad c_\k = \frac{u_\k}{2} V_\k'(u_\k) = u_\k^2 - \frac{n \k}{u_\k^{2n}}.
\end{equation*}
\begin{enumerate}
\item The root $u_\k$ is positive if and only if $\k \in (- \infty, \k_{\text{max}}]$.  The map $\gamma : (- \infty, \k_{\text{max}}] \to [v, \infty)$ defined by $\gamma(\k) = u_\k$ is a decreasing homeomorphism.  
\item When $\k \in (- \infty, \k_{\text{max}})$, the metric $\l_\k$ has a cone angle about $\R^{2n-2}$ at $u = u_\k$ with cone angle $2 \pi c_\k$.
\item The map $\b : (- \infty, \k_{\text{max}}] \to [0, \infty)$ defined by $\b(\k) = c_\k$ is a decreasing homeomorphism.  In particular, as $\k$ varies from $0$ to $\k_{\text{max}}$, the cone angle $2 \pi c_\k$ takes every value from $2 \pi$ to $0$ exactly once.
\end{enumerate}
\end{lemma}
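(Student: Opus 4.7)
The plan is to reduce everything to a careful analysis of the auxiliary function $f(u) = (1-u^2)u^{2n}$ on $[0,\infty)$, whose critical-point structure encodes the entire statement.

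First I would compute $f'(u) = u^{2n-1}\bigl(2n - (2n+2)u^2\bigr)$, which vanishes at $u=0$ and $u=v=\sqrt{n/(n+1)}$. So $f$ is increasing on $[0,v]$ from $0$ to $f(v) = \frac{1}{n+1}v^{2n} = \k_{\max}$, and then decreasing on $[v,\infty)$ from $\k_{\max}$ to $-\infty$. Since roots of $V_\k$ correspond to solutions of $f(u) = \k$, the largest positive root exists exactly when $\k \leq \k_{\max}$, and lies in $[v,\infty)$. Restricting $f$ to $[v,\infty)$ gives a decreasing homeomorphism onto $(-\infty,\k_{\max}]$, and its inverse is precisely $\gamma$. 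This proves (1).

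For (2), I would work with the $(\th,u)$-slice of $\l_\k$, namely $u^2 V_\k\, d\th^2 + \frac{1}{V_\k} du^2$, and introduce the radial variable $\rho(u) = \int_{u_\k}^u V_\k(t)^{-1/2}\,dt$. Since $V_\k$ has a simple zero at $u_\k$ (the case $\k = \k_{\max}$ is excluded here, where $V_\k$ has a double zero), the first-order expansion $V_\k(t) \approx V_\k'(u_\k)(t-u_\k)$ gives $\rho^2 \approx 4(u-u_\k)/V_\k'(u_\k)$, hence $u^2 V_\k \approx \tfrac{1}{4} u_\k^2\, V_\k'(u_\k)^2\,\rho^2 = c_\k^2 \rho^2$. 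Thus in the $(\rho,\th)$ coordinates the slice is $d\rho^2 + c_\k^2 \rho^2\, d\th^2$ to leading order, so the cone angle is $2\pi c_\k$. The alternate expression $c_\k = u_\k^2 - n\k/u_\k^{2n}$ is an immediate substitution from $V_\k'(u) = 2u - 2n\k u^{-(2n+1)}$.

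For (3), the key observation is that the defining equation $V_\k(u_\k) = 0$ yields $\k = u_\k^{2n}(1-u_\k^2)$, which when substituted into the formula for $c_\k$ gives the clean identity
\begin{equation*}
c_\k = u_\k^2 - n(1 - u_\k^2) = (n+1)u_\k^2 - n.
\end{equation*}
Since $u_\k \mapsto (n+1)u_\k^2 - n$ is an increasing homeomorphism $[v,\infty)\to[0,\infty)$ (noting $(n+1)v^2 - n = 0$), composing with the decreasing homeomorphism $\gamma$ from (1) shows $\b$ is a decreasing homeomorphism $(-\infty,\k_{\max}]\to[0,\infty)$. Finally, evaluating at the endpoints of $[0,\k_{\max}]$ gives $u_0 = 1$, $c_0 = 1$ (cone angle $2\pi$), and $c_{\k_{\max}} = 0$ (cone angle $0$); continuity of $\b$ then yields the intermediate-value statement about cone angles.

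I expect no real obstacle: part (1) is purely single-variable calculus on $f$, part (2) is a standard near-the-axis expansion, and the main insight for part (3) — that $c_\k$ is a simple polynomial in $u_\k$ once the root condition is used — collapses what would otherwise be an awkward two-parameter monotonicity check into a one-line computation.
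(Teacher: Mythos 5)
Your proposal is correct and follows essentially the same route as the paper: part (1) via the shape of $f(u)=(1-u^2)u^{2n}$, part (2) via the first-order expansion of $V_\k$ at its simple zero $u_\k$ (your arclength variable $\rho$ agrees to leading order with the paper's substitution $s=\sqrt{2u_\k(u-u_\k)/c_\k}$), and part (3) via the identity $c_\k=(n+1)u_\k^2-n$ obtained by substituting $\k=u_\k^{2n}(1-u_\k^2)$. No gaps; your treatment of (1) is, if anything, slightly more explicit than the paper's.
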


Lemma \ref{lem:cone angle lemma} is the last thing that we needed to prove Theorem \ref{thm:main theorem}.

\begin{proof}[Proof of Theorem \ref{thm:main theorem}]
Recall that we need to prove that our model Einstein metric $\l_\k$ is isometric to the K\"{a}hler-Einstein metric $\omega_\a$ whose existence is guaranteed by \cite[Theorem 2.2]{GH}.
In \cite[Theorem 2.9]{GH} the authors argue that $f_\a(r)$, the warping function for the horizontal fiber of $\omega_\a$, must satisfy a specific differential equation.
The initial conditions for this differential equation are $f'_\a(0) = 0$ and $f_\a(0) \in \left( \sqrt{\frac{n}{n+1}},1 \right) = (v,1)$.
With respect to equation \eqref{eqn:lambda V}, the coefficient of the horizontal distribution is $u = \cosh(r)$.  
Also, recall in \eqref{eqn:lambda V} that we made the substitution $V(u) = u^2 - 1 = \sinh^2(r) = (\cosh'(r))^2$.

In the differential equation \eqref{eqn:V differential equation} we set $u = f(r)$ and $V(u) = (f'(r))^2$.
Note that, in terms of \eqref{eqn:V differential equation}
\begin{equation*}
V' = \frac{dV}{du} = \frac{dV}{dr} \cdot \frac{dr}{du} = 2 f'(r) f''(r) \left( \frac{1}{f'(r)} \right) = 2 f''(r).
\end{equation*}
Substituting into \eqref{eqn:V differential equation} gives
\begin{equation*}
2f'' + \frac{2n (f')^2}{f} = (2n+2)f - \frac{2n}{f}
\end{equation*}
which can be rearranged as
\begin{equation*}
\frac{f''}{f} + n \frac{(f')^2}{f^2} + n \frac{1}{f^2} = n+1.
\end{equation*}
This is the differential equation derived in Theorem 2.9 of \cite{GH}.
To see that the solution $V_\k$ of this equation is the same solution as $\omega_\a$, just compare the initial conditions listed above for $f_\a$ with Lemma \ref{lem:cone angle lemma}.
\end{proof}

\begin{proof}[Proof of Lemma \ref{lem:cone angle lemma}]
Again, this proof is very similar to \cite{FP}, but the presence of the $u^2$ in the $d \th^2$ term of \eqref{eqn:lambda} causes enough differences that we verify all details below.  

For (1), just note that $u_\k$ is the largest value of $u$ which satisfies $f(u) = \k$.  
Statement (1) then follows from consideration of the graph of $f(u) - \k$.  

To prove statement (2), fix $\k \in (- \infty, \k_{\text{max}})$.  
To calculate the cone angle of $\l_\k$ we need to consider values of $u$ near $u_\k$.  
One has, for $u$ near $u_\k$,
\begin{equation*}
V_\k(u) = V_\k(u_\k) + V_\k'(u_\k) (u - u_\k) + O \left( (u-u_\k)^2 \right) \approx \frac{2 c_\k}{u_\k} (u-u_\k) .
\end{equation*}
We then, in the metric $\l_\k$, make the substitution
\begin{equation*}
s = \sqrt{\frac{2 u_\k}{c_\k} (u-u_\k)}
\end{equation*}
Direct calculations verify the following:
\begin{align*}
&du^2 = \frac{2 c_\k}{u_\k} (u - u_\k) ds^2 \approx V_\k ds^2 \\
&\frac{c_\k^2 s^2}{u_\k^2} = \frac{2 c_\k}{u_\k} (u - u_\k) \approx V_\k \\
&u^2 = \left( \frac{s^2 c_\k}{2 u_\k} + u_\k \right)^2.
\end{align*}
Substituting for $\l_\k$ in \eqref{eqn:lambda}, we have
\begin{equation}\label{eqn:cone angle equation}
\l_\k \approx \left( \frac{s^2 c_\k}{2 u_\k} + u_\k \right)^2 \cm_{n-1} + \left( \frac{s^2 c_\k}{2 u_\k} + u_\k \right)^2 \frac{c_\k^2 s^2}{u_\k^2} d \th^2 + ds^2.
\end{equation}
Notice that, as $u \to u_\k$, we have $s \to 0$.  
Therefore, as $u \to u_\k$, the leading (linear) coefficient of $d \th^2$ in \eqref{eqn:cone angle equation} approaches $c_\k^2 s^2$.  
Thus, $\l_\k$ has cone angle of $2 \pi c_\k$ about $\R^{2n-2}$.  

To verify (3), recall that $u_\k$ is the largest solution to the equation $f(u) = \k$ or, equivalently, $(1 - u_\k)^2 u_\k^{2n} = \k$.  
This gives
\begin{equation*}
c_\k = u_\k^2 - \frac{n \k}{u_\k^{2n}} = u_\k^2 - \frac{n (1-u_\k^2) u_\k^{2n}}{u_\k^{2n}} = (1+n)u_\k^2 - n.
\end{equation*}
This equation defines an increasing homeomorphism from $[v, \infty) \to [0, \infty)$.  
Composing with $\gamma$ from (1) yields the desired decreasing homeomorphism $\b$.  
For the last claim, just note that when $\k = 0$ we have $u_0 = 1$ and therefore $c_0 = 1$.  
Thus, when $\k = 0$, the cone angle is $2 \pi$ (which is clear since $\l_0 = \cm_n$).  
The claim then follows from the fact that $\b$ is decreasing.
\end{proof}

Notice the immediate corollary of Lemma \ref{lem:cone angle lemma}.

\begin{cor}\label{cor:cone angle}
For any integer $d \geq 2$, there exists a unique $\k = \k_d \in (0, \k_{\text{max}}]$ such that the metric $\l_\k$ has cone angle $2 \pi / d$ about $\C \H^{n-1}$.
\end{cor}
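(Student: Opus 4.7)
The plan is to read this off directly from Lemma \ref{lem:cone angle lemma} by invertibility. By part (2) of the lemma, the cone angle of $\l_\k$ about $\R^{2n-2}$ is precisely $2\pi c_\k$, so the condition ``$\l_\k$ has cone angle $2\pi/d$'' is equivalent to $c_\k = 1/d$. Thus the statement reduces to showing that the equation $\b(\k) = 1/d$ has a unique solution in $(0, \k_{\text{max}}]$.

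First I would pin down the endpoint values of $\b$ needed for the argument. From the definition $c_\k = (n+1)u_\k^2 - n$ derived in the proof of Lemma \ref{lem:cone angle lemma}(3), together with the boundary behavior $\gamma(0) = 1$ and $\gamma(\k_{\text{max}}) = v = \sqrt{n/(n+1)}$ from part (1), one obtains $\b(0) = c_0 = 1$ and $\b(\k_{\text{max}}) = c_{\k_{\text{max}}} = 0$. Since $\b$ is a decreasing homeomorphism on $(-\infty, \k_{\text{max}}]$, its restriction to $[0, \k_{\text{max}}]$ is a decreasing homeomorphism onto $[0, 1]$.

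Next, for any integer $d \geq 2$ we have $1/d \in (0, 1/2] \subset (0, 1)$, so $1/d$ lies in the interior of the image $[0, 1]$ of this restriction. By the intermediate value theorem (or simply by bijectivity of the homeomorphism) there is a unique $\k_d \in (0, \k_{\text{max}})$ with $\b(\k_d) = 1/d$, and the corresponding cone angle is $2\pi c_{\k_d} = 2\pi/d$. Since $\l_\k$ is well-defined precisely on $(-\infty, \k_{\text{max}}]$ and the cone angle map is injective on this interval, uniqueness holds in the full range $(0, \k_{\text{max}}]$ as claimed.

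There is no real obstacle here: the work has already been done in Lemma \ref{lem:cone angle lemma}, and the corollary is just an application of the intermediate value theorem to the explicit decreasing homeomorphism $\b$. The only mild point worth double-checking is that the value $1/d$ indeed lies strictly inside $(0,1)$ for every $d \geq 2$, so that $\k_d$ lands in the open interval $(0, \k_{\text{max}})$ rather than at the singular endpoint $\k_{\text{max}}$ where the metric degenerates.
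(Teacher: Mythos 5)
Your proof is correct and follows exactly the route the paper intends: the paper states this as an immediate consequence of Lemma \ref{lem:cone angle lemma}(3), which already asserts that the cone angle $2\pi c_\k$ takes every value from $2\pi$ to $0$ exactly once as $\k$ runs from $0$ to $\k_{\text{max}}$, and you have simply spelled out that unpacking (including the correct endpoint values $\b(0)=1$, $\b(\k_{\text{max}})=0$ and the observation that $1/d\in(0,1)$). No discrepancy with the paper's argument.
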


We now aim to find upper and lower bounds for the sectional curvature of $\l_\k$ in order to give an independent proof of \cite[Theorem 2.11]{GH}.  
It turns out that $\l_\k$ also has the same upper and lower curvature bounds as the model Einstein metric in \cite{FP}, but verifying this is more involved due to the nonzero mixed terms in Theorem \ref{thm:curvature formulas}.
Recall that $W^2 = V_\k = u^2 - 1 + \k u^{-2n}$.  
From this, we observe that
\begin{align*}
&2 W W' = V_\k' = 2u - 2 n \k u^{-2n-1} \\
&2 \left( \left( W' \right)^2 + W W'' \right) = V_\k'' = 2 + 2n(2n+1) \k u^{-2n-2}.
\end{align*}
The following Corollary restates the curvature formulas from Theorem \ref{thm:curvature formulas} with the above calculations.

\begin{cor}\label{cor:curvature formulas for lambda kappa}
With respect to the orthonormal basis defined in equations \eqref{eqn:ON basis 1} through \eqref{eqn: ON basis 3} we have, up to the symmetries of the curvature tensor, the following formulas for the nonzero components of the Riemann curvature tensor $R^\k$ of $\l_\k$.
\begin{align}
&R^\k_{i,i+1,i,i+1} = -4 - \frac{4 \k}{u^{2n+2}} \qquad \text{ for } i \text{ odd.} \label{eqn:curvature cor eqn 1} \\
&R^\k_{i,j,i,j} = - 1 - \frac{\k}{u^{2n+2}}. \label{eqn:curvature cor eqn 2}  \\
&R^\k_{i, 2n-1, i, 2n-1} = R^\k_{i,2n,i,2n} = - 1 + \frac{n \k}{u^{2n+2}}. \label{eqn:curvature cor eqn 3}  \\
&R^\k_{2n-1,2n,2n-1,2n} = -4 - \frac{2 n (n-1) \k}{u^{2n+2}}. \label{eqn:curvature cor eqn 4}  \\
&R^\k_{i, i+1, j, j+1} = 2 R^\k_{i, j, i+1, j+1} = -2 R^\k_{i, j+1, i+1, j}= -2 - \frac{2 \k}{u^{2n+2}} \quad \text{ for } i, j \text{ odd}. \label{eqn:curvature cor eqn 5} \\
&R^\k_{i, i+1, 2n-1, 2n} = 2R^\k_{i, 2n-1, i+1, 2n} = -2R^\k_{i, 2n, i+1, 2n-1}= -2 + \frac{2 n \k}{u^{2n+2}}. \label{eqn:curvature cor eqn 6}
\end{align}
In the formulas above we have $1 \leq i, j \leq 2n-2$ and, if $i$ and $j$ appear in the same formula, we assume that $(Y_i, Y_j)$ does not form a holomorphic pair.
We also assume $i$ is odd in \eqref{eqn:curvature cor eqn 6}.
\end{cor}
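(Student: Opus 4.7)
The corollary is a direct computational restatement of Theorem \ref{thm:curvature formulas} under the specific choice $V_\k(u) = u^2 - 1 + \k u^{-2n}$ guaranteed by Theorem \ref{thm:Einstein functions}. My plan is to record the three combinations of $W$ and its derivatives that appear in the six formulas of Theorem \ref{thm:curvature formulas}, note that each can be written as a rational function of $u$ and $\k$ without ever solving for $W$ itself, and then substitute term by term.

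The first step is to compute, from $W^2 = V_\k$:
\begin{align*}
1 + W^2 &= u^2 + \k u^{-2n}, \\
2 W W' &= V_\k'(u) = 2u - 2n\k u^{-2n-1}, \\
2\bigl((W')^2 + W W''\bigr) &= V_\k''(u) = 2 + 2n(2n+1)\k u^{-2n-2}.
\end{align*}
In particular, $WW'$ and $(W')^2 + WW''$ depend polynomially on $u$ and $u^{-2n-2}$, which is exactly what is needed; the individual quantities $W$ and $W'$ (which involve square roots) never appear on the right-hand side of any curvature formula in Theorem \ref{thm:curvature formulas}.

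The second step is pure substitution into the six lines of Theorem \ref{thm:curvature formulas}. The formulas \eqref{eqn:Thm 2.1 eqn 1}, \eqref{eqn:Thm 2.1 eqn 2}, \eqref{eqn:Thm 2.1 eqn 5} use only $(1+W^2)/u^2 = 1 + \k u^{-2n-2}$ and give \eqref{eqn:curvature cor eqn 1}, \eqref{eqn:curvature cor eqn 2}, \eqref{eqn:curvature cor eqn 5}. The formulas \eqref{eqn:Thm 2.1 eqn 3} and \eqref{eqn:Thm 2.1 eqn 6} use only $WW'/u = 1 - n\k u^{-2n-2}$ and yield \eqref{eqn:curvature cor eqn 3} and \eqref{eqn:curvature cor eqn 6}. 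The remaining case \eqref{eqn:Thm 2.1 eqn 4} requires combining $-3WW'/u$ with $-\bigl(WW''+(W')^2\bigr)$:
\begin{equation*}
-3 + \frac{3n\k}{u^{2n+2}} - 1 - \frac{n(2n+1)\k}{u^{2n+2}} = -4 + \frac{2n\k - 2n^2\k}{u^{2n+2}} = -4 - \frac{2n(n-1)\k}{u^{2n+2}},
\end{equation*}
which is exactly \eqref{eqn:curvature cor eqn 4}. I would simply record these six substitutions in order.

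There is no real obstacle here; the only thing to watch is bookkeeping of signs and the factor of $n$ in the $R^\k_{2n-1,2n,2n-1,2n}$ term, where the two contributions to the $u^{-2n-2}$ coefficient partially cancel. Everything else is immediate once the three quantities above are in hand.
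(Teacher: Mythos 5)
Your proposal is correct and follows exactly the paper's route: the paper records the same three quantities $1+W^2 = u^2 + \k u^{-2n}$, $2WW' = V_\k'$, and $2\bigl((W')^2+WW''\bigr) = V_\k''$ immediately before the corollary and presents the corollary as a direct substitution into Theorem \ref{thm:curvature formulas}. Your arithmetic, including the partial cancellation giving the $-2n(n-1)\k/u^{2n+2}$ coefficient in \eqref{eqn:curvature cor eqn 4}, checks out.
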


With these formulas we can now prove the following.

\begin{proposition}\label{prop:lambda negative curvature}
For all $\k \in (0, \k_{\text{max}})$, the metric $\l_\k$ has all sectional curvatures bounded above by a negative constant.  
Moreover, the sectional curvature $K$ of $\l_\k$ satisfies
\begin{equation*}
-4 - \frac{2n(n-1) \k}{u^{2n+2}} \leq K \leq -1 + \frac{n \k}{u^{2n+2}}
\end{equation*}
\end{proposition}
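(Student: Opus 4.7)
The plan is to derive a closed-form expression for the sectional curvature $K(U,V)$ of $\l_\k$ on an arbitrary orthonormal pair $(U,V)$ and then bound it using AM-GM together with a simple bivector identity. The starting point is the decomposition $R^\k = (1+\varepsilon)\,R^{\cm_n} + \varepsilon\, R^{\mathrm{corr}}$ with $\varepsilon := \k/u^{2n+2}$, where $R^{\cm_n}$ is the curvature tensor of the ambient complex-hyperbolic metric and $R^{\mathrm{corr}}$ captures the ``vertical excess'' of the components in Corollary \ref{cor:curvature formulas for lambda kappa}. A direct reading of that corollary gives $R^{\mathrm{corr}}$ explicitly; its only nonzero components are supported on quadruples with at least one index in $\{2n-1, 2n\}$, with diagonal values $R^{\mathrm{corr}}_{i, 2n-1, i, 2n-1} = R^{\mathrm{corr}}_{i, 2n, i, 2n} = n+1$ for $i\le 2n-2$ and $R^{\mathrm{corr}}_{2n-1,2n,2n-1,2n} = -2(n-2)(n+1)$, together with the corresponding mixed components.

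Next, expanding $K(U,V) = \sum R^\k_{ijkl}\,\omega_{ij}\omega_{kl}$ in the bivector coordinates $\omega_{ij} = u_iv_j - u_jv_i$, the ``two-holomorphic-pair'' mixed contributions collapse after combining the first Bianchi identity with the Pl\"ucker relation
\begin{equation*}
\omega_{2I-1,2I}\,\omega_{2J-1,2J} \;=\; \omega_{2I-1,2J-1}\,\omega_{2I,2J}\; -\; \omega_{2I-1,2J}\,\omega_{2I,2J-1},
\end{equation*}
which expresses the decomposability of the bivector $U\wedge V$. The outcome is the concise identity
\begin{equation*}
K(U,V) \;=\; (1+\varepsilon)\bigl(-1 - 3\cos^2\alpha\bigr)\;+\;(n+1)\varepsilon\bigl[\, T_4 \,-\, 2(n-2)\omega_n^2 \,+\, 6\,\omega_n\cos_H\alpha\,\bigr],
\end{equation*}
where $\cos\alpha = \la JU, V\ra$, $\omega_n = \omega_{2n-1,2n}$, $\cos_H\alpha = \cos\alpha - \omega_n$, and $T_4 = \sum_{i\le 2n-2}(\omega_{i,2n-1}^2 + \omega_{i,2n}^2)$. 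The extreme cases $U\wedge V = Y_{2n-1}\wedge Y_{2n}$ and $U = Y_i$ with $V\in\{Y_{2n-1}, Y_{2n}\}$ recover the claimed extremal curvature values $-4-2n(n-1)\varepsilon$ and $-1+n\varepsilon$ exactly, so both bounds are sharp.

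Let $H$, $V$, $M$ denote the squared norms of the horizontal, purely vertical, and mixed parts of $U\wedge V$ (so $V = \omega_n^2$, $M = T_4$, and $H + V + M = 1$). Rearranging the closed form presents $(-1+n\varepsilon) - K$ and $K + 4 + 2n(n-1)\varepsilon$ as nonnegative combinations of $H$, $V$, $M$, and $\cos_H^2\alpha$, together with a single sign-indefinite cross term proportional to $(1-n\varepsilon)\,\omega_n\cos_H\alpha$. For the upper bound this cross term is absorbed by AM-GM between the $V$ and $\cos_H^2\alpha$ coefficients, using the pointwise estimate $\cos_H^2\alpha \le H$ (a consequence of Cauchy-Schwarz for the Hermitian inner product of the horizontal projections $U_H, V_H$); the parameter-compatibility condition for the AM-GM step then reduces to the polynomial inequality $(n+1)\varepsilon\bigl[(2n+3) - n\varepsilon\bigr] \ge 0$, which holds whenever $n\varepsilon < 1$. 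For the lower bound the key additional input is the decomposable-bivector identity
\begin{equation*}
M \;=\; |U_H|^2\,|V_V|^2 \,+\, |U_V|^2\,|V_H|^2 \,+\, 2\la U_V, V_V\ra^2 \;\ge\; 2\sqrt{VH},
\end{equation*}
which allows the coefficient of $M$ to swallow the cross term directly via AM-GM.

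Finally, the upper bound $-1 + n\k/u^{2n+2}$ is strictly bounded above by a negative constant for $\k \in (0,\k_{\text{max}})$: the defining relation $(1-u_\k^2)u_\k^{2n} = \k$ gives $n\k/u_\k^{2n+2} = n/u_\k^2 - n$, and Lemma \ref{lem:cone angle lemma}(1) yields $u_\k > v = \sqrt{n/(n+1)}$ whenever $\k < \k_{\text{max}}$, so $-1 + n\k/u^{2n+2} \le -(n+1) + n/u_\k^2 < 0$. The main obstacle will be the simplification step producing the closed-form identity for $K$; although the Pl\"ucker relation makes everything collapse, one has to apply it uniformly across the three mixed curvature components in Corollary \ref{cor:curvature formulas for lambda kappa}, in both the horizontal-horizontal and horizontal-vertical cases, and keep careful track of signs via the first Bianchi identity.
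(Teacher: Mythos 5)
Your proposal is correct, and it takes a genuinely different route from the paper. The paper first checks the coordinate $2$-planes, then handles a general plane $\s$ by choosing an adapted holomorphic frame (with $\check{X}_1$ parallel to $d\phi(A)$ and $d\phi(B)\in\mathrm{span}(\check{X}_1,\check{X}_2,\check{X}_3)$) so that $A$ and $B$ have very few nonzero components; the curvature of $\s$ then expands into eight terms from Corollary 3.7 which are regrouped by hand into perfect squares, and the extremes are read off by inspecting the coefficients. You instead split $R^\k=(1+\e)R^{\cm_n}+\e R^{\mathrm{corr}}$ with $\e=\k/u^{2n+2}$ (a decomposition I checked is consistent with every line of Corollary 3.7, including the mixed terms, with $R^{\mathrm{corr}}_{i,2n-1,i,2n-1}=n+1$ and $R^{\mathrm{corr}}_{2n-1,2n,2n-1,2n}=-2(n-2)(n+1)$), and use the Pl\"ucker relation to collapse the mixed contributions into the single cross term $6\omega_n\cos_H\alpha$; the resulting frame-independent closed form reproduces the two extremal values exactly, and the two AM--GM absorptions go through: for the upper bound the discriminant condition reduces to $3(n+1)\e\bigl[(2n+3)-n\e\bigr]\ge 0$, and for the lower bound the coefficient of $V$ vanishes identically and the inequality $M\ge 2\sqrt{HV}$ (together with $\cos_H^2\alpha\le H$) leaves a remainder $2n(n+1)\e M\ge 0$. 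What your approach buys is an invariant formula valid for \emph{every} $2$-plane with no adapted-frame choice (in particular it covers planes such as $\mathrm{span}(Y_5,Y_1+Y_6)$, for which the paper's prescription of a unit $A\in\s$ orthogonal to $\partial/\partial u$ with $d\phi(A)\neq 0$ does not literally apply), at the cost of a heavier bookkeeping step in deriving the closed form; the paper's frame reduction buys a short list of coefficients that can be grouped into squares directly. The only caution is that your displayed formula for $M$ should read $|U_H|^2|V_V|^2+|U_V|^2|V_H|^2-2\la U_H,V_H\ra\la U_V,V_V\ra$, which equals your expression only after invoking $\la U,V\ra=0$ — worth saying explicitly, since the bound $M\ge 2\sqrt{HV}$ is the linchpin of the lower-bound argument.
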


\begin{proof}
First note that, of the curvatures of the coordinate planes in equations \eqref{eqn:curvature cor eqn 1} through \eqref{eqn:curvature cor eqn 4}, the only one that is greater than $-1$ is \eqref{eqn:curvature cor eqn 3}.  
But, using the notation for $\k_{\text{max}}$ and $v$ from Lemma \ref{lem:cone angle lemma}, observe that
\begin{equation*}
-1 + \frac{n \k}{u^{2n+2}} < -1 + \frac{n \k_{\text{max}}}{v^{2n+2}} = -1 + \frac{n}{n+1} \cdot \frac{1}{v^2} = 0.
\end{equation*}
Therefore, $K < 0$ for all coordinate $2$-planes with respect to the orthonormal frame defined in \eqref{eqn:ON basis 1} through \eqref{eqn: ON basis 3}, but we also have to account for the nonzero mixed terms.

In what follows we use the notation from Subsection 2.1. 
Let $E = \C \H^{n-1} \times \S^1$, $q \in E \times (0, \infty)$, and $p = \phi(q)$ where $\phi$ is the orthogonal projection onto the $\C \H^{n-1}$ factor.
Let $\s \subset T_q(E \times (0, \infty))$ be a $2$-plane such that $\s \neq \text{span}(\partial / \partial \th, \partial / \partial u )$.
Our goal is to choose a convenient frame to calculate $K(\s)$.  
By construction, $d \phi (\s)$ is at least $1$-dimensional in $T_p \C \H^{n-1}$. 
Let $A \in \s$ be a unit vector orthogonal to $\partial / \partial u$ which satisfies that $d \phi(A) \neq 0$, and let $\check{X}_1 \in T_p \C \H^{n-1}$ be a unit vector which is parallel to $d \phi(A)$. 
Choose $B \in \s$ so that $(A, B)$ is an orthonormal basis for $\s$.  
Let $\check{X}_2 = J \check{X}_1$, and choose a unit vector $\check{X}_3$ orthogonal to $\text{span}(\check{X}_1, \check{X}_2)$ in such a way that $d \phi(B) \in \text{span}(\check{X}_1, \check{X}_2, \check{X}_3)$.
Extend $(\check{X}_1, \check{X}_2, \check{X}_3)$ to orthogonal vector fields $(X_1, X_2, X_3)$ about $q$ in exactly the same manner as described in Subsection 2.1.  
Finally, let
\begin{equation*}
Y_1 = \frac{1}{u} X_1, \quad Y_2 = \frac{1}{u} X_2, \quad Y_3 = \frac{1}{u} X_3, \quad Y_{5} = \frac{1}{uW} \ddt, \quad Y_{6} = W \frac{\partial}{\partial u}.
\end{equation*}
Then there exist constants $a_1, a_5, b_1, b_2, b_3, b_5, b_6$ such that
\begin{equation*}
A = a_1 Y_1 + a_5 Y_{5} \qquad B = b_1 Y_1 + b_2 Y_2 + b_3 Y_3 + b_5 Y_{5} + b_6 Y_{6}
\end{equation*}
with
\begin{equation*}
a_1^2 + a_5^2 = 1 \qquad \sum b_i^2 = 1 \qquad a_1 b_1 + a_5 b_5 = 0.
\end{equation*}
We then compute
\begin{align*}
&K(\s) = \l_\k(R^\k(A,B)A, B) \\
&= a_1^2 b_2^2 R^\k_{1,2,1,2} + a_1^2 b_3^2 R^\k_{1,3,1,3} + (a_1 b_5 - a_5 b_1)^2 R^\k_{1,5,1,5} + a_1^2 b_6^2 R^\k_{1,6,1,6} + a_5^2 b_2^2 R^\k_{2,5,2,5} \\
& \hskip 6pt + a_5^2 b_3^2 R^\k_{3,5,3,5} + a_5^2 b_6^2 R^\k_{5,6,5,6} + 2a_1 a_5 b_2 b_6 \left( R^\k_{1,2,5,6} - R^\k_{1,6,2,5} \right) \\
&= (4a_1^2 b_2^2 + a_1^2 b_3^2) \left( -1 - \frac{\k}{u^{2n+2}} \right) + ((a_1 b_5 - a_5 b_1)^2 + a_1^2 b_6^2 + a_5^2 b_2^2 + a_5^2 b_3^2) \left( -1 + \frac{n \k}{u^{2n+2}} \right) \\
& \hskip 4pt + a_5^2 b_6^2 \left( -4 - \frac{2n(n-1) \k}{u^{2n+2}} \right) + 6 a_1 a_5 b_2 b_6 \left( -1 + \frac{n \k}{u^{2n+2}} \right) \\
&= (4a_1^2 b_2^2 + a_1^2 b_3^2) \left( -1 - \frac{\k}{u^{2n+2}} \right) + ((a_1 b_5 - a_5 b_1)^2 + a_5^2 b_3^2) \left( -1 + \frac{n \k}{u^{2n+2}} \right) \\
& \hskip 4pt + (a_1 b_6 + a_5 b_2)^2 \left( -1 + \frac{n \k}{u^{2n+2}} \right) + a_5^2 b_6^2 \left( -4 - \frac{2n(n-1) \k}{u^{2n+2}} \right) + 4 a_1 a_5 b_2 b_6 \left( -1 + \frac{n \k}{u^{2n+2}} \right) \\
&< (2a_1^2 b_2^2 + a_1^2 b_3^2) \left( -1 - \frac{\k}{u^{2n+2}} \right) + ((a_1 b_5 - a_5 b_1)^2 + a_5^2 b_3^2) \left( -1 + \frac{n \k}{u^{2n+2}} \right) \\
& \hskip 4pt + (a_1 b_6 + a_5 b_2)^2 \left( -1 + \frac{n \k}{u^{2n+2}} \right) + a_5^2 b_6^2 \left( -2 - \frac{n(n-1) \k}{u^{2n+2}} \right) \\
& \hskip 4pt + 2 (a_1 b_2 + a_5 b_6)^2 \left( -1 + \frac{n \k}{u^{2n+2}} \right). \\
\end{align*}
This last expression is clearly negative.  
To obtain an upper curvature bound, a quick observation shows that we want to choose $a_1 = b_6 = 0$.  
This forces $a_5 = \pm 1$ and, by orthogonality, we must have $b_5 = 0$.  
The last expression above reduces to
\begin{equation*}
\left( -1 + \frac{n \k}{u^{2n+2}} \right) \left( b_1^2 + b_3^2 + b_2^2 \right) = \left( -1 + \frac{n \k}{u^{2n+2}} \right).
\end{equation*}
The lower curvature bound is easily seen to occur when $|a_5| = |b_6| = 1$, which yields
\begin{equation*}
K(\s) = -4 - \frac{2n(n-1) \k}{u^{2n+2}}.
\end{equation*}
\end{proof}

\begin{remark}\label{rmk:extreme curvatures}
The extreme curvature values of $\l_\k$ are functions of both the dimension $n$ and ramification degree $d$.  
The maximum and minimum curvatures both occur at the ramification locus, when $u = u_\k$.  
As $d \to \infty$, we have $u_\k \to v$ and $\k \to \k_{\text{max}}$.  
We saw in the above argument that, in this situation, $(n \k)/u^{2n+2}$ will then approach $1$ as $d \to \infty$.  
The lower curvature bound thus approaches $-4 - 2(n-1) = -2(n+1)$ while the upper curvature bound approaches $0$.  
Therefore, for any $\e > 0$, one can choose the ramification degree $d$ sufficiently large so that the lower curvature bound of $\l_\k$ lies in $(-2(n+1), -2(n+1) + \e)$ while the upper curvature bound lies in $(- \e, 0)$.
Note that these are the same curvature bounds stated in the Introduction of \cite{GH}.

It is also easy to deduce \cite[Theorem 2.4 (4)]{GH} from our previous work.  
Note that $|\l_\k - \cm_n| \leq 2 \k/u^{2n}$.  
Recalling that $u = \cosh(r)$ and $r$ denotes the distance from the branching locus, we see that the model Einstein metric $\l_\k$ approaches $\cm_n$ exponentially in $r$.  
\end{remark}

\section{Complex hyperbolic branched covers and convergence to the K\"{a}hler-Einstein metric}\label{sect:construction of manifolds}
In this section we first show how to construct a sequence of complex hyperbolic branched cover manifolds $(X_k)$ with a specific geometric property (namely, that the normal injectivity radius of the branching locus goes to infinity while the diameter of the branching locus remains fixed).  
We will then discuss how to use our model Einstein metric $\l_\k$ to construct a smooth, approximately Einstein metric $g_k$ on $X_k$ for each $k$.
This metric will be negatively curved for $k$ sufficiently large.  
Finally we will show how to use an inverse function theorem to perturb $g_k$ to obtain a negatively curved Einstein metric $\mathfrak{e}_k$ on $X_k$.  
Since our approximate Einstein metric $g_k$ is isometric to $\omega_{d,R}$ from \cite{GH}, equation (30) from \cite{GH} shows that $\mathfrak{e}_k$ asymptotically approaches the unique K\"{a}hler-Einstein metric on $X_k$ with negative Ricci constant.  

The authors want to emphasize that the work in this section was completed {\it after} the announcement of \cite{GH}.  
This section really just combines known results from the literature, including \cite{GH}, with a small amount of work to check that they apply to our metric $\l_\k$.
But we include full details here for completeness.

\subsection{The sequence of complex hyperbolic branched cover manifolds}
Fix integers $d, n \geq 2$.
The goal of this subsection is to prove the following.

\begin{theorem}\label{thm:construction of manifolds}
There exists a sequence of compact K\"{a}hler manifolds $(M_k)$ of complex dimension $n$ with connected, totally geodesic submanifolds $(N_k)$ which satisfy the following.
\begin{enumerate}
\item The universal cover of $M_k$ is isometric to $\C \H^n$.
\item The submanifold $N_k \subset M_k$ is embedded, has real codimension $2$, and its preimage in the universal cover of $M_k$ is isometric to multiple disjoint copies of $\C \H^{n-1}$. 
\item $N_k$ is isometric to $N_\ell$ for all indices $k, \ell$. 
\item The fundamental class $[N_k] \in H_{2n-2}(M_k, \mathbb{Z})$ is $d$-divisible.
\item The normal injectivity radius $\eta_k$ of $N_k$ in $M_k$ satisfies $\eta_k \geq k$.
\end{enumerate}
\end{theorem}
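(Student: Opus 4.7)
The plan is to combine the Stover--Toledo construction \cite{ST} with subgroup separability, following the Hamenst\"adt--J\"ackel \cite{HJ} tower-of-covers template from the real hyperbolic setting.

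First I would invoke \cite{ST} to produce a closed arithmetic complex hyperbolic $n$-manifold $M'$ containing a connected, embedded, totally geodesic complex hypersurface $N'$ whose fundamental class is $d$-divisible in $H_{2n-2}(M',\mathbb{Z})$. Write $\Gamma' = \pi_1(M') \le PU(n,1)$, $\Lambda' = \pi_1(N') \le \Gamma'$, and fix the totally geodesic copy $V_0 \subset \chn$ of $\mathbb{C}\mathbb{H}^{n-1}$ stabilized by $\Lambda'$.

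For each $k$, I enumerate representatives $g_1, \ldots, g_m$ of the (finitely many, by cocompactness) double cosets $\Lambda' g \Lambda'$ with $g \in \Gamma' \setminus \Lambda'$ and $d_{\chn}(V_0, g V_0) \le 2k$. Using the separability of $\Lambda'$ in $\Gamma'$ (which holds since $\Gamma'$ is arithmetic and $\Lambda'$ is geometrically finite), pick a finite-index subgroup $\Gamma_k \le \Gamma'$ containing $\Lambda'$ and disjoint from $\{g_1, \ldots, g_m\}$. Set $M_k := \chn/\Gamma_k$ and let $N_k \subset M_k$ be the image of $V_0$. Properties (1)--(3) then follow immediately, since $N_k \cong V_0/\Lambda' \cong N'$ is connected, embedded, totally geodesic, with preimage in $\chn$ a disjoint union of totally geodesic copies of $\mathbb{C}\mathbb{H}^{n-1}$; and the exclusion of the $g_i$ forces the normal injectivity radius to satisfy $\eta_k \ge k$, which is (5).

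For condition (4), I would refine $\Gamma_k$ using the arithmetic/congruence structure of $\Gamma'$ to find a further normal subgroup $\Gamma_k^\ast \trianglelefteq \Gamma'$ containing $\Lambda'$, contained in $\Gamma_k$, and of index coprime to $d$ (via intersection with a principal congruence subgroup at a prime $p \nmid d$, suitably adjusted to retain $\Lambda'$). In the resulting Galois cover $p : M_k^\ast \to M'$, the deck group acts transitively on the components of $p^{-1}(N')$, which are therefore pairwise homologous; the transfer map applied to $[N'] = d\beta'$ in $M'$ gives $r\,[N_k^\ast] = d\,p^!\beta'$ in $M_k^\ast$, where $r$ is the number of components (a divisor of the coprime-to-$d$ cover degree). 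A B\'ezout argument on $1 = ar + bd$ then yields $[N_k^\ast] = d\bigl(a\,p^!\beta' + b\,[N_k^\ast]\bigr)$, so $[N_k^\ast]$ is $d$-divisible. Since $\Lambda' \subseteq \Gamma_k$, the intermediate covering $M_k^\ast \to M_k$ restricts to a homeomorphism on the distinguished lift $N_k^\ast$, pushing the divisibility down to $[N_k]$.

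The main obstacle is precisely this coordination step in (4): subgroup separability provides geometric control over the placement of the $\mathbb{C}\mathbb{H}^{n-1}$-translates but does not, by itself, transport homological divisibility up to the cover. One must simultaneously engineer a Galois refinement of $\Gamma_k$ of degree coprime to $d$ while preserving both the containment $\Lambda' \subseteq \Gamma_k^\ast$ and the geometric separation — all of which becomes feasible thanks to the arithmetic structure of $\Gamma'$ and its congruence subgroups at primes $p \nmid d$.
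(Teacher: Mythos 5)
Your handling of conditions (1)--(3) and (5) is essentially the paper's argument: both proofs fix a lift $V_0$ of the hypersurface with stabilizer $\Lambda'$, and use separability of $\Lambda'$ in $\Gamma'$ to pass to finite-index subgroups containing $\Lambda'$ that exclude the finitely many elements responsible for short orthogeodesics (you do this in one shot for each $k$, the paper does it one bad element at a time, citing \cite{DKV} for the finiteness; the difference is immaterial). Two small caveats: separability of general geometrically finite subgroups of complex hyperbolic lattices is not a known theorem, so the separability you need must be quoted as Bergeron's result \cite{Bergeron} for stabilizers of totally geodesic $\C \H^{n-1}$'s in arithmetic lattices of simple type, which is what the paper uses; and at the base case Stover--Toledo only produce a possibly \emph{disconnected} $N'$ with $d$-divisible total class, so the paper additionally invokes \cite[Lemma 3.8]{GH} to extract one connected component whose own class is $d$-divisible.

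The genuine gap is in your step (4). You require a finite-index subgroup $\Gamma_k^{\ast}\trianglelefteq\Gamma'$ that is normal in $\Gamma'$, contains $\Lambda'$, and is contained in $\Gamma_k$. Any normal subgroup of $\Gamma'$ containing $\Lambda'$ contains the normal closure of $\Lambda'$ in $\Gamma'$, i.e.\ every $\Gamma'$-conjugate of $\Lambda'$; this normal closure is a single subgroup independent of $k$, and nothing in the separability step (or in the congruence structure) guarantees that it sits inside the carefully pruned $\Gamma_k$ --- for large $k$ this is a very strong and unjustified demand, and intersecting with a principal congruence subgroup does not help, since such a subgroup does not contain $\Lambda'$ and ``adjusting'' it to do so while remaining normal reinstates the same obstruction. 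The good news is that the entire coprime-degree/B\'ezout detour is unnecessary: for an \emph{arbitrary} finite cover $p : M'' \to M'$ the transfer satisfies $[p^{-1}(N')] = p^{!}[N'] = d\, p^{!}\beta'$, so the full preimage has $d$-divisible class with no hypothesis on the covering degree (this is \cite[Lemma 3.7]{GH}), and \cite[Lemma 3.8]{GH} then shows that each individual component of that preimage --- in particular your distinguished lift $N_k = V_0/\Lambda'$ --- has $d$-divisible fundamental class. Replacing your Galois refinement with these two lemmas closes the gap and recovers the paper's proof.
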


\begin{remark}
By condition $(4)$, the cyclic $d$-fold branched cover $X_k$ of $M_k$ about $N_k$ is a smooth manifold.  
By $(5)$ the normal injectivity radius of the branching locus of $X_k$ will approach infinity as $k \to \infty$.
The branched cover manifold $X_k$ is K\"{a}hler by \cite{Zheng} and, for $k$ sufficiently large, this is the manifold which admits a negatively curved K\"{a}hler-Einstein metric.
\end{remark}

\begin{proof}[Proof of Theorem \ref{thm:construction of manifolds}]
The construction of the pair $(M_k, N_k)$ is recursive.  
We first show how to construct $(M_0, N_0)$ and then, given $(M_k, N_k)$, we show how to construct $(M_{k+1}, N_{k+1})$ which satisfies $(1)-(5)$ above.  

Using the notation of \cite{ST}, let $\Gamma < \text{PU}(n,1)$ be a cocompact congruence arithmetic lattice of simple type.  
It is known that, after possibly passing to a finite congruence cover, we may assume that the quotient $M = \Gamma \setminus \C \H^n$ contains an embedded submanifold $N$ satisfying $(2)$ above.  
In the event that this submanifold is disconnected, we let $N$ denote one of its components.

By a result of Stover and Toledo \cite[Proposition 5.1]{ST}, there exists a finite cover $(M', N')$ of $(M, N)$ such that $[N'] \in H_{2n-2}(M', \mathbb{Z})$ is $d$-divisible.  
We set $M_0 = M'$ and, if $N'$ is connected, we let $N_0 = N'$.  
If $N'$ contains multiple components, then an application of the Universal Coefficient Theorem \cite[Lemma 3.8]{GH} shows that each individual component of $N'$ must also have fundamental class that is $d$-divisible.  
Choose one such component and call it $N_0$.  

We now assume that the pair $(M_k, N_k)$ satisfying $(1)-(5)$ in Theorem \ref{thm:construction of manifolds} has been constructed, and we show how to construct the pair $(M_{k+1}, N_{k+1})$.  
This argument is analogous to that of \cite[Proposition 3.3]{GH}.  
To start, if the normal injectivity radius $\eta_k$ of $N_k$ is $k+1$ or greater, then we just set $M_{k+1} = M_k$ and $N_{k+1} = N_k$.  
So we may assume that $\eta_k < k+1$.  

Let $\Gamma_k < \text{PU}(n,1)$ be such that $M_k = \Gamma_k \setminus \C \H^n$.  
The preimage of $N_k$ in $\C \H^n$ will generally have many connected components, each isometric to $\C \H^{n-1}$.  
Fix one of these components and call it $V$.  
Let $\Lambda_k = \text{Stab}_{\Gamma_k}(V)$ denote the stabilizer of $V$ in $\Gamma_k$.  
Since $\eta_k < k+1$ there exists a geodesic with endpoints in $N_k$ whose length is at most $2(k+1)$, which meets $N_k$ orthogonally at each endpoint, and which does not admit a homotopy into $N_k$.  
Connecting these endpoints with a geodesic contained in $N_k$ gives a closed path in $M_k$ which corresponds to a nontrivial element $\gamma \in \pi_1(M_k) \cong \Gamma_k$.
Note that, by construction, $\gamma \nin \Lambda_k$.    

By a result of Bergeron \cite{Bergeron}, there exists a finite index subgroup $\Gamma_k' < \Gamma_k$ such that $\Lambda_k < \Gamma_k'$ but $\gamma \nin \Gamma_k'$.  
By \cite{DKV} there exist only finitely many homotopy classes of closed curves in $M_k$ with length less than $2(k+1) + \text{diam}(N_k)$. 
So, after finitely many steps, we obtain a finite index subgroup $\Gamma_{k+1} < \Gamma_k$ with $\Lambda_k < \Gamma_{k+1}$ which satisfies the following.  
Let $M_{k+1} = \Gamma_{k+1} \setminus \C \H^n$, and let $N_{k+1} \subset M_{k+1}$ be equal to $\Lambda_k \setminus V$.  
Then $N_{k+1}$ is connected, is isometric to $N_k$, and has normal injectivity radius $\eta_{k+1} \geq k+1$.  
This verifies conditions $(1) - (3)$ and $(5)$ in the Theorem.

For property $(4)$, first notice that $M_{k+1}$ is a finite cover of $M_k$ since $\Gamma_{k+1} < \Gamma_k$ has finite index.  
Since the fundamental class of $N_k$ is $d$-divisible, by \cite[Lemma 3.7]{GH} we have that the lift of $N_k$ in $M_{k+1}$ has $d$-divisible fundamental class.  
This lift may have multiple connected components, one of which is $N_{k+1}$.  
But \cite[Lemma 3.8]{GH} shows that, if the fundamental class of the preimage of $N_k$ is $d$-divisible, then the fundamental class of each component is $d$-divisible.  
Thus, $N_{k+1}$ satisfies condition $(4)$ of the Theorem.  
\end{proof}

\subsection{The approximate Einstein metric}
In this subsection we consider the manifold pairs $(M_k, N_k)$ constructed in Theorem \ref{thm:construction of manifolds}.  
Fix an integer $d \geq 2$ as in the Theorem, and let $X_k$ denote the $d$-fold cyclic branched cover of $M_k$ about $N_k$.

By Corollary \ref{cor:cone angle} there exists a unique $\k = \k_d \in (0, \k_{\text{max}})$ such that the cone angle of $\l_{\k}$ about some copy of $\C \H^{n-1}$ in $\C \H^n$ is $2 \pi/d$.  
Define a smooth function $\chi: \R \to [0, \infty)$ which satisfies $\chi(u) = 1$ if $u \leq 1/2$ and $\chi(u) = 0$ for $u \geq 1$.  
We then write
\begin{equation*}
	V_k (u) = u^2 - 1 + \frac{\k}{u^{2n}} \chi \left( \frac{u}{\eta_k} \right)
\end{equation*}
and let $g'_k$ denote the metric $\l$ associated with this choice of $V_k$.
Since $V_k = u^2 - 1$ at all points of distance at least $\eta_k$ from $\C \H^{n-1}$, the metric $g'_k$ descends to a well-defined orbifold metric on $M_k$.  
By abuse of notation we again denote this metric $g'_k$, and note that $g'_k$ will be everywhere Riemannian except at $N_k$.  
At each point of $N_k$ the metric $g'_k$ will have a cone angle of $2 \pi / d$.  

Let $\rho_k : X_k \to M_k$ denote the $d$-fold cyclic branched covering map, and equip $X_k$ with the metric $g_k := \rho^*_k(g'_k)$.
By an abuse of notation we also use $N_k$ to denote the branching locus of $X_k$.
By construction the metric $g_k$ is smooth about $N_k$, and $g_k$ is certainly negatively curved and Einstein within the $(1/2) \eta_k$-tube of $N_k$ and outside of the $\eta_k$-tube of $N_k$.  
Since $\eta_k \to \infty$, all derivatives of $\chi ( u / \eta_k )$ approach $0$ as $k \to \infty$.  
Therefore, for $k$ sufficiently large, $g_k$ will be negatively curved everywhere and will be approximately Einstein on the $((1/2) \eta_k, \eta_k)$-annulus about $N_k$.  
This approximation becomes finer as $k \to \infty$. 
We summarize this information in the following Proposition.

\begin{proposition}\label{prop:g_k properties}
There exists an integer $K$ such that, for all $k \geq K$, there exists a smooth Riemannian metric $g_k$ on $X_k$ with the following properties.
\begin{enumerate}
\item There is a constant $c > 0$ such that $\text{sec}(g_k) < -c$, where $\text{sec}$ denotes the sectional curvature.
\item $\text{Ric}(g_k) + (2n+2) g_k$ is nonzero only in the annular neighborhood $(1/2) \eta_k < r < \eta_k$.
\item For any $m \in \mathbb{N}$, there exists a constant $A$ depending on $m$ but not on $k$ such that
	\begin{equation*}
	|| \text{Ric}(g_k) + (2n+2)g_k ||_{C^m} \leq \frac{A}{\left( \cosh \left( \frac{1}{2} \eta_k \right) \right)^{2n+2}}
	\end{equation*}
\item We have
	\begin{equation*}
	\lim_{k \to \infty} \int_{X_k} \bigr| \text{Ric}(g_k) + (2n+2)g_k|^2 d \text{vol}_{g_k} = 0
	\end{equation*}
\end{enumerate}
\end{proposition}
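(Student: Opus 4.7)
Partition each $X_k$ into the inner tube $T_k^{\text{in}} = \{r \le \tfrac{1}{2}\eta_k\}$, the outer region $T_k^{\text{out}} = \{r \ge \eta_k\}$, and the annulus $A_k = \{\tfrac{1}{2}\eta_k < r < \eta_k\}$ about $N_k$. On $T_k^{\text{in}}$ the cutoff is identically $1$, so $V_k = V_{\k}$ and $g_k$ lifts the model metric $\l_{\k}$; by Theorem~\ref{thm:Einstein functions} this is Einstein with constant $-2(n+1)$, and by Proposition~\ref{prop:lambda negative curvature} combined with the value $\k = \k_d$ from Corollary~\ref{cor:cone angle} its sectional curvatures are bounded above by the $k$-independent negative constant $-(n+1) + n/u_{\k_d}^2$. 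On $T_k^{\text{out}}$ the cutoff is identically $0$, so $V_k = u^2-1$ and $g_k$ lifts $\cm_n$, which is Einstein with the same constant and has sectional curvatures in $[-4,-1]$. This immediately establishes (2) and reduces the remaining claims to estimates on $A_k$.

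For (3), I exploit the fact that in the orthonormal frame preceding Theorem~\ref{thm:Einstein functions}, the entries of $\mathrm{Ric}(\l) + 2(n+1)\l$ are affine in $V, V', V''$, so the map $V \mapsto \mathrm{Ric}(\l(V)) + 2(n+1)\l(V)$ splits as a linear differential operator $L$ of order two acting on $V$, plus constants that cancel in the Einstein equation. Writing
\begin{equation*}
V_k = V_{\k} + \delta_k, \qquad \delta_k := \frac{\k}{u^{2n}}\bigl(\chi - 1\bigr),
\end{equation*}
the vanishing of the deviation at $V_\k$ (Theorem~\ref{thm:Einstein functions}) gives $\mathrm{Ric}(g_k) + 2(n+1)g_k = L(\delta_k)$. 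A short direct computation shows $L(\k u^{-2n}) = 0$ --- this is precisely the identity expressing that $\k u^{-2n}$ is the homogeneous solution of the ODE whose inhomogeneous solution was produced in the proof of Theorem~\ref{thm:Einstein functions}. Consequently every contribution to $L(\delta_k)$ in which $\chi$ appears undifferentiated cancels, leaving only terms proportional to derivatives of $\chi$ divided by positive powers of $\eta_k$, multiplied by $\k$ and negative powers of $u$. Since $u \ge \cosh(\tfrac{1}{2}\eta_k)$ throughout $A_k$ and $\chi$ is a fixed bump function, this yields the $C^0$ bound of (3); higher $C^m$ bounds follow by differentiation, since each additional derivative either falls on a derivative of $\chi$ (producing a bounded further derivative of $\chi$ and a harmless extra $1/\eta_k$) or on a power of $1/u$ (which only improves the decay).

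Part (4) is then a volume integration of the squared pointwise estimate from (3) over $A_k$. Since $N_k$ has geometry independent of $k$ by Theorem~\ref{thm:construction of manifolds}(3) and a tube of radius $\eta_k$ about a codimension-two totally geodesic submanifold in $\chn$ has volume growing only polynomially in $\cosh(\eta_k)$, this polynomial growth is dominated by the exponential decay factor $\cosh(\tfrac{1}{2}\eta_k)^{-2(2n+2)}$, forcing the integral to zero. For (1) on $A_k$, substituting $V_k, V_k', V_k''$ into the curvature formulas of Theorem~\ref{thm:curvature formulas} shows that each component of $R^{g_k}$ differs from the corresponding component of $R^{\cm_n}$ by a quantity of order $\|\chi\|_{C^2}/\cosh(\tfrac{1}{2}\eta_k)^{2n+2}$; for $k$ sufficiently large this perturbation is strictly smaller in magnitude than $1/2$, so sectional curvatures on $A_k$ remain bounded above by a uniform negative constant, completing (1) on all of $X_k$.

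The main technical obstacle is the bookkeeping for the $C^m$ estimate in (3). Although the underlying cancellations are structural, writing them out at every order requires tracking which combinations of $V_k, V_k', V_k''$, and their higher derivatives appear when one iteratively differentiates the Ricci formulas. The linearity of the deviation together with the identity $L(\k u^{-2n}) = 0$ keeps the calculation tractable and, crucially, guarantees that the decay rate $\cosh(\tfrac{1}{2}\eta_k)^{-(2n+2)}$ persists at every order rather than degrading under successive differentiation.
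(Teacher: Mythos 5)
Your argument is correct and follows the same overall strategy as the paper's proof: parts (1) and (2) come from the fact that $g_k$ coincides with the exactly Einstein model metric $\l_\k$ inside the half-tube and with $\cm_n$ outside it, part (3) is a pointwise estimate on the transition annulus, and part (4) follows by integrating the square of (3) against the annulus volume. The only real difference is that the paper disposes of (3) by citing Fine--Premoselli (Proposition 3.1(1) there, with $n-3$ replaced by $2n$), whereas you open that black box: your observation that $\k u^{-2n}$ solves the homogeneous equation $V' + \tfrac{2n}{u}V = 0$, so that every term of $L(\delta_k)$ in which $\chi$ is undifferentiated cancels, is precisely the mechanism behind the cited estimate, and your treatment of (1) on the annulus via the curvature formulas of Theorem \ref{thm:curvature formulas} likewise fills in what the paper calls ``clear.'' One bookkeeping point worth making explicit: to land on the stated exponent $2n+2$ (rather than $2n+1$) in (3), the cutoff must be taken as a function of the distance $r$ --- as statement (2) and the Fine--Premoselli construction require, notwithstanding the paper's notation $\chi(u/\eta_k)$ --- since then each $u$-derivative falling on $\chi$ contributes an extra factor $1/\sinh(r) \approx 1/u$ in addition to $1/\eta_k$; and in (4) both the annulus volume (of order $\cosh(\eta_k)^{2n} \approx \cosh(\tfrac12\eta_k)^{4n}$) and the decay factor $\cosh(\tfrac12\eta_k)^{-2(2n+2)}$ are exponential in $\eta_k$, so one should compare the rates $4n < 4n+4$ rather than describe it as polynomial-versus-exponential; with that comparison made, your conclusion stands.
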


\begin{proof}
Statement $(2)$ is clear by the definition of $\chi$.  
Statement $(1)$ is also clear since the curvature of $g_k'$ is bounded above by a negative constant, and $g_k$ is a $C^2$-approximation of $g_k'$.
Statement $(3)$ is identical to \cite[Proposition 3.1 (1)]{FP} but with replacing $n-3$ by $2n$.

For $(4)$, first note that the integrand is supported on the $((1/2) \eta_k, \eta_k)$-annulus about $N_k$.  
The metric $g_k$ only differs from $\cm_n$ within the normal bundle of $N_k$, and the maximum length of any unit vector in this annulus is
$uW \approx u^2 \leq \cosh^2(\eta_k)$.
Thus, an upper bound for the $g_k$-volume of this region is given by
	\begin{equation*}
	d \cdot \text{vol}(N_k) \cdot \cosh^2(\eta_k) \cdot \eta_k^2.
	\end{equation*}
From Theorem \ref{thm:construction of manifolds} (3) we know that $\text{vol}(N_k)$ is constant.  
Then, using the approximation $\cosh(x) \approx (1/2)e^x$ for $x$ large, statement $(4)$ then follows from statement $(3)$ assuming $n > 1$.  
\end{proof}

\subsection{Perturbing $g_k$ to obtain a negatively curved Einstein metric}
In this subsection we will closely follow the notation and results from \cite[Sections 2, 4]{HJ} and \cite[Section 4]{FP}, both of which originate (at least) from the work of Anderson in \cite{Anderson}.  
The proof of Theorem \ref{thm:main theorem 2} below is identical to the proof of \cite[Theorem 4.3]{HJ}.  
Technically, one would need to replace $n-1$ with $2n+2$ in equations (2.1), (2.3), and the subsequent Lemmas of \cite{HJ}, but the proofs of these Lemmas are independent of this positive constant.  
Alternately, one could simply scale our approximate Einstein metric $g_k'$ appropriately so that its Einstein constant is $-(n-1)$.  
Then, the results of \cite{HJ} apply directly.
The goal of this subsection is to provide the terminology and results necessary from \cite{FP} and \cite{HJ} to justify Theorem \ref{thm:main theorem 2}, with the hope that this subsection can also serve as a gentle introduction for non-experts to the above references.

Let $\Sym$ denote the collection of smooth, symmetric, bilinear forms on $TX_k$ of any signature.  
The collection of Riemannian metrics on $X_k$ forms an open cone within $\Sym$.
One would like to define an operator $E: \Sym \to \Sym$ by
\begin{equation*}
	E(g) = \text{Ric}(g) + (2n+2)g
\end{equation*}
but this operator is not elliptic and thus the inverse function theorem will not generally be applicable.  
The remedy for this issue is called {\it Bianchi guage fixing}.
Given $g \in \Sym$, define $\Phi_g : \Sym \to \Sym$ by
\begin{equation*}
	\Phi_g(h) = \text{Ric}(h) + (2n+2)h + \text{div}^*_h(\beta_g(h))
\end{equation*}
where $\text{div}^*_h$ is the $L^2$-adjoint of the divergence with respect to $g$ and $\beta_g : \Sym \to T^*X$ is the {\it Bianchi operator}.  
We omit the definitions since we will not use them here, but we direct the interested reader to see \cite{FP}, \cite{HJ}, and the references therein.  
An important property of $\Phi_g$ is that, if $|\beta_g|$ is bounded and $\text{Ric}(g) < \lambda g$ for some $\lambda < 0$, then $\Phi_g(h) = 0$ if and only if $h$ satisfies both $\text{Ric}(h) = -(2n+2)h$ and $\beta_g(h) = 0$.  
See \cite[Lemma 2.1]{Anderson} or \cite[Lemma 2.1]{HJ}.
So, at least for metrics with negative Ricci curvature, $\Phi_g$ is sufficient to be able to detect Einstein metrics near $g$.  
But it is generally not necessary as it will miss Einstein metrics $\mathfrak{e}$ which satisfy $\beta_g(\mathfrak{e}) \neq 0$ as discussed in \cite{Anderson}.

The reason why it is advantageous to add $\beta_g$ to $E$ to obtain $\Phi_g$ is that the operator $\Phi_g$ is now elliptic.  
Its linearization (at $g$) can be explicitly calculated as
\begin{equation*}
\mathcal{L}_g(h) := (D \Phi_g)_g(h) = \frac{1}{2} \Delta_Lh + (2n+2)h
\end{equation*}
where $\Delta_L = \nabla^* \nabla h + \text{Ric}(h)$ is the {\it Lichnerowicz Laplacian}.  
A good reference for the notation is \cite[Section 9.3]{Petersen}.  
Koiso \cite{Koiso} proved that, if $g$ is Einstein with negative sectional curvature, then $\mathcal{L}_g$ has a uniform $L^2$-spectral gap.  
Our metric $g_k$ is only approximately Einstein, but an analog to this result was proved in \cite[Lemma 4.4]{FP} (see also the corresponding \cite[Lemma 4.1]{HJ}) to apply to this situation.

All of the results above apply in all dimensions.  
The reason why the negatively curved Einstein metrics from \cite{FP} are only guaranteed to exist in dimension $4$ is twofold.  
The first reason is because the construction of the manifold pairs $(M_k, N_k)$ in \cite{FP} do not give sufficient $L^2$-control of $\Phi_{g_k}(g_k)$.  
Hamenst\"{a}dt and J\"{a}ckel in \cite{HJ} observed that, by using subgroup separability to bound the diameter of the branching locus, one could obtain such $L^2$-control of $\Phi_{g_k}(g_k)$.  
This is encoded in Proposition \ref{prop:g_k properties} (4) above.
The second reason is due to the bound on $\mathcal{L}_g^{-1}$.  
In \cite{FP} Fine and Premoselli are able to find an upper Lipschitz bound for $\mathcal{L}_g^{-1}$ that depends on $k$ \cite[Lemma 4.13]{FP}, whereas Hamenst\"{a}dt and J\"{a}ckel are able to construct a $C^0$-estimate for $\mathcal{L}_g^{-1}$ \cite[Lemma 2.2]{GH} and use this to find such a bound that is {\it independent of $k$} \cite[Proposition 4.2]{GH}.
This \cite[Proposition 4.2]{GH} replaces the estimate needed by Fine and Premoselli in \cite[Theorem 4.15]{FP} and is the key to extending their results to all dimensions.

We are now ready to prove Theorem \ref{thm:main theorem 2}.

\begin{proof}[Proof of Theorem \ref{thm:main theorem 2}]
Consider
\begin{equation*}
	\Phi_{g_k} : C^2(\Sym) \to C^0(\Sym)
\end{equation*}
where $C^\ell(\Sym)$ denotes $\Sym$ with the $C^\ell$-topology.  
By \cite[Proposition 4.2]{GH} there exists $\e > 0$, independent of $k$, such that $\Phi_{g_k}$ surjects onto the $\e$-neighborhood of $\Phi_{g_k}(g_k)$, denoted $B(\Phi_{g_k}(g_k), \e)$.  
By Proposition \ref{prop:g_k properties} (4) we have that $||g_k||_{L^2} \to 0$, and consequently $||\Phi_{g_k}(g_k)||_{C^0} \to 0$.  
Thus there exists $K \in \mathbb{N}$ such that, for all $k \geq K$, $0 \in B(\Phi_{g_k}(g_k), \e)$.
Therefore, there exists $\mathfrak{e}_k \in C^2(\Sym)$ such that $\Phi_{g_k}(\mathfrak{e}_k) = 0$.  
It is known that $\Phi_{g_k}$ is Lipschitz with constant independent of $k$ (see \cite[Lemma 4.1]{FP} and \cite[Proposition 4.2]{HJ}).  
Hence, by possibly choosing a larger $K$ if necessary, we have that $\mathfrak{e}_k$ is positive-definite and negatively curved.  
The metric $\mathfrak{e}_k$ is thus the desired negatively curved Einstein metric.
Moreover, by letting $\e \to 0$ as $k \to \infty$, one can ensure that
\begin{equation*}
\lim_{k \to \infty} ||g_k - \mathfrak{e}_k||_{C^2} = 0
\end{equation*}
\end{proof}

\begin{remark}\label{rmk:einstein approximation}
From the equation above, we see that our model Einstein metric $C^2$-approximates the negatively curved Einstein metric $\mathfrak{e}_k$, and this approximation gets closer as $k \to \infty$.  
By equation $(30)$ in \cite{GH} we see that $g_k$ also $C^2$-approximates the negatively curved K\"{a}hler-Einstein metric $\omega_k$.  
It is unclear to the authors if, eventually, one must have $\mathfrak{e}_k = \omega_k$ for all $k$, or if this sequence of manifolds supports a sequence of negatively curved Einstein metrics that $C^2$-converge to $\omega_k$.  
\end{remark}



\vskip 20pt

\bibliographystyle{plain}
\bibliography{references.bib}

\end{document}